\definecolor{darkblue}{rgb}{0.0,0.0,0.5}
\definecolor{darkgreen}{rgb}{0.0,0.5,0.0}
\definecolor{darkred}{rgb}{0.8,0.0,0.0}
\newcommand{\del}{\partial}
\newcommand{\Cb}{\mathbb{C}}
\newcommand{\Qb}{\mathbb{Q}}
\newcommand{\Zb}{\mathbb{Z}}
\newcommand{\Rb}{\mathbb{R}}
\newcommand{\Pb}{\mathbb{P}}
\DeclareFontFamily{U}{rcjhbltx}{}
\DeclareFontShape{U}{rcjhbltx}{m}{n}{<->rcjhbltx}{}
\DeclareSymbolFont{hebrewletters}{U}{rcjhbltx}{m}{n}
\DeclareMathSymbol{\mem}{\mathord}{hebrewletters}{109}
\renewcommand*\env@matrix[1][\arraystretch]{%
  \edef\arraystretch{#1}%
  \hskip -\arraycolsep
  \let\@ifnextchar\new@ifnextchar
  \array{*\c@MaxMatrixCols c}}
\let\svthefootnote\thefootnote
\newcommand\freefootnote[1]{%
  \let\thefootnote\relax%
  \footnotetext{#1}%
  \let\thefootnote\svthefootnote%
}
\theoremstyle{definition}
\newtheorem{theorem}{Theorem}[section]
\newtheorem{definition}[theorem]{Definition}
\newtheorem{example}[theorem]{Example}
\newtheorem{remark}[theorem]{Remark}
\newtheorem{proposition}[theorem]{Proposition}
\newtheorem{assumptions}[theorem]{Assumptions}
\newtheorem*{acknowledgements*}{Acknowledgements}
\date{}
\title{A-model Implications of Extended Mirror Symmetry}
\author{Lukas Hahn and Johannes Walcher}
\begin{document}

\freefootnote{\textit{Date}: \today }

\maketitle

\begin{abstract}
Associativity of the quantum product ensures flatness of the Dubrovin connection and is the basis
for Hodge-theoretic mirror symmetry of Calabi-Yau threefolds.  We use ring and module structure on
cohomology pertaining to a Lagrangian submanifold to define an extension of the A-model Variation of
mixed Hodge structure that matches the predictions from extended mirror symmetry.  Our construction
makes contact with axioms for open Gromov-Witten theory recently proposed by Solomon-Tukachinsky.
\end{abstract}

\section{Introduction}

Initiated by the predictions of \cite{candelas_pair_1991}, mirror symmetry has been a source of
plentiful novel mathematics related to the geometry of string compactifications.  In its original
form, it asserts a relationship between \enquote{mirror pairs} of Calabi-Yau threefolds $X$ and $Y$,
whose utilization as extra dimensions in string theory leads to the same effective physics in four
dimensions.  In terms of the topological phases of string theory, it can be understood as an
equivalence between the A-model on $X$ and the B-model on $Y$, implying the characteristic
relationship of Hodge numbers
\begin{equation}\label{mirroring}
h^{3-p,q}(X)=h^{p,q}(Y)
\end{equation} 
and the encoding of highly non-trivial enumerative information of $X$ in the Hodge theory of $Y$.
In particular, the Gromov-Witten invariants of $X$ can be extracted from the periods that determine
the variation of Hodge structure (VHS) associated to the middle dimensional cohomology of the mirror
family $\mathcal{Y}\rightarrow \Delta^*$.  This objective crucially involves the asymptotic behavior
of this VHS near a point of \enquote{maximal degeneracy} in the moduli space, that can be locally
described as a variation of mixed Hodge structure (VMHS) of a certain (Hodge-Tate) type
\cite{deligne_local_1997}.  A Hodge-theoretic framework for the mirror phenomenon is now based on
the observation that this object admits an A-model interpretation in terms of the $H^2$-module
structure on the even dimensional cohomology $H^{\text{even}}(X)$ defined by the quantum product
\cite{morrison_mirror_1993,morrison_mathematical_1997} .  As observed in
\cite{cattani_asymptotic_2001,cattani_frobenius_2003} the appearance of variations of MHS of this
type in the A-model can be viewed as a general consequence of a cup product deformed by a quantum
potential subject to a WDVV-type of equation.  An equivalence of the respective variations defines
the notion of Hodge-theoretic mirror pair and is the consequence of Mirror Theorems
\cite{givental_equivariant_1996,lian_mirror_1997} which imply the enumerative predictions of mirror
symmetry in case of the quintic.
\\
Open strings and D-branes enter the stage in the homological mirror symmetry program
\cite{kontsevich_homological_1994}, in which mirror pairs of Calabi-Yau threefolds are characterized
by an equivalence of the $A_\infty$ D-brane categories of the A- and B-model.  In the A-model, the
derived Fukaya category $\text{Fuk}(X)$ is defined via Floer theory, with objects given by suitably
decorated Lagrangian submanifolds of $X$, while on the B-side, the derived category of coherent
sheaves is purely classical. Homological mirror symmetry then amounts to the statement
\begin{equation}
\text{Fuk}(X)\cong D^b(Y),
\end{equation}
which is by now proven for the quintic \cite{sheridan_homological_2015} and known to imply
Hodge-theoretic closed string mirror symmetry \cite{ganatra_mirror_2015} .
\\
While the enumerative geometry of Lagrangian submanifolds implicitly appears in the definition of the
Fukaya category \cite{fukaya_counting_2011}, an explicit calculation of \enquote{open} Gromov-Witten
invariants according to the traditional mirror principle has been put forward in
\cite{walcher_opening_2007}. In analogy to the closed case, the invariants associated to a given
Lagrangian arise in an expansion around a point of maximal degeneracy of a Hodge-theoretic quantity
attached to the mirror object. Physically, this involves an equivalence between the corresponding A-
and B-brane superpotentials, which are the basic observables attached to a given brane vacuum.  On
the B-side, the superpotential is given by the holomorphic Chern-Simons invariant.  In many
situations, it can be calculated from an algebraic cycle $\mathcal{C}\rightarrow \Delta^*$ that
represents the algebraic Chern class of the B-brane. Namely, the algebraic cycle determines a Hodge
theoretic normal function that generally classifies extensions of variations of MHS
\cite{morrison_d-branes_2009}.
\\
Until now, this extension of the VMHS in the B-model has not been interpreted geometrically in the
A-model, partly due to the unavailability of sufficiently general axioms for open Gromov-Witten
invariants, which were only recently found in a series of papers by Solomon-Tukachinsky
\cite{solomon_differential_2020, solomon_point-like_2021, solomon_relative_2021}.  The purpose of
this paper is to fill this gap by providing an A-model description for the extended VMHS of the
B-model.
\\
The paper is organized as follows.  In Section \hyperref[section2]{2} we give a review of closed
string mirror symmetry formulated in a Hodge-theoretic language, following
\cite{doran_algebraic_2014, da_silva_jr_arithmetic_2016}.
This includes the degenerating behavior of
the VHS in the B-model and the reconstruction of the resulting objects based on A-model data.  While
focusing on the one-parameter case, we emphasize the role of the WDVV equations in a multi-parameter
situation and conclude with a description of Hodge-theoretic mirror pairs and the standard example
of \cite{candelas_pair_1991}. Section \hyperref[section3]{3} is concerned with the extension of
B-model data by algebraic cycles and the associated normal functions following
\cite{morrison_d-branes_2009,schwarz_integrality_2017}.  We put a special emphasis on the asymptotic
behavior of the normal functions around a point of maximal degeneracy and derive the general
structure of the B-brane superpotential based on monodromy considerations.  An A-model
interpretation of this superpotential is given in Section \hyperref[section4]{4}.  We begin with a
characterization of A-brane superpotentials, taking into account the Solomon-Tukachinsky axioms for
open Gromov-Witten invariants, and define the notion of extended mirror pair.  A partial recipe to
reconstruct the relevant normal function in the A-model based on an extension of the quantum product
by holomorphic disks is presented.  We conclude with the formulation of an extended Mirror Theorem
for the main example of \cite{walcher_opening_2007}.

\begin{acknowledgements*}
The authors thank A.\ Alcolado, S.\ Nill and S.\ Tukachinsky for various fruitful discussions on
extensions. J.W.\ is grateful to H.\ Jockers and D.\ Morrison for on-going collaboration on the
limiting value of the D-brane superpotenial. The work of L.H.\ is supported by a fellowship from
the Evangelisches Studienwerk Villigst.  This work is also supported in part by the Deutsche
Forschungsgemeinschaft (DFG, German Research Foundation) under Germany's Excellence Strategy EXC
2181/1 - 390900948 (the Heidelberg STRUCTURES Excellence Cluster).
\end{acknowledgements*}

\section{Closed String Mirror Symmetry}\label{section2}

\subsection{Maximal Degenerations in the B-model}

Let $\pi:\mathcal{Y}\rightarrow \Delta^*$ be a smooth family of projective Calabi-Yau threefolds  over a punctured disk $\Delta^*$ with local coordinate $z$, for which each fiber $\pi^{-1}(z)=Y_z$ is simply connected and has Hodge numbers $h^{3,0}=h^{2,1}=h^{1,2}=h^{0,3}=1$.
We think of the base as a small neighborhood around a boundary point of the compactified moduli space $\overline{\mathcal{M}}(Y)$ of complex structures on a reference fiber $Y$ and further assume that there is a semi-stable continuation to the disk $\Delta$
\begin{equation}
\xymatrix{
\mathcal{Y}\,\ar@{^{(}->}[r]\ar[d] & \overline{\mathcal{Y}}\ar[d] & \,Y_0 \ar@{_{(}->}[l] \ar[d]\\
\Delta^*\,\ar@{^{(}->}[r] & \Delta & \,\{0\}, \ar@{_{(}->}[l]
}
\end{equation}
where the singular fiber $Y_0$ has at most normal crossing singularities. 
As $Y_z$ varies across $\Delta^*$, its middle-dimensional integral cohomology fits into a $\Zb$-local system given by the higher direct image $\mathcal{H}^3_\Zb=R^3\pi_*\Zb$ with fibers $(\mathcal{H}^3_\Zb)_z=H^3(Y_z;\Zb)$.
Over $\Cb$, these cohomology groups admit a Hodge decomposition
\begin{equation}
	H^3:=H^3(Y_z;\Cb)=H^{3,0}(Y_z)\oplus H^{2,1}(Y_z)\oplus H^{1,2}(Y_z)\oplus H^{0,3}(Y_z)
\end{equation}
for which there is a decreasing Hodge filtration
\begin{equation}
	F^p=\bigoplus_{i\geq p}H^{i,3-i}(Y_z)
\end{equation}
with $F^3\subset F^2\subset F^1\subset F^0=H^3$.
The Hodge filtration varies holomorphically across $\Delta^*$, such that $\mathcal{F}^p:=F^p\otimes \mathcal{O}_{\Delta^*}$ defines a corresponding filtration of holomorphic subbundles on $\mathcal{H}^3:=\mathcal{H}^3_\Zb\otimes \mathcal{O}_{\Delta^*}$.
A natural antisymmetric polarization form $Q(\cdot,\cdot):\mathcal{H}_\Zb^3\otimes \mathcal{H}_\Zb^3\rightarrow \Zb$ is induced from the Poincar\'e duality pairing on integral cohomology and can be extended linearly to $\mathcal{H}^3$.
The local system $\mathcal{H}^3_\Cb=\mathcal{H}^3_\Zb\otimes \Cb$ uniquely determines the flat Gau\ss-Manin connection $\nabla: \mathcal{H}^3\rightarrow \mathcal{H}^3\otimes \Omega^1_{\Delta^*}$ with $\nabla(\mathcal{H}^3_\Cb)=0$ that satisfies the Griffiths transversality condition $\nabla(\mathcal{F}^p)\subset \mathcal{F}^{p-1}\otimes \Omega^1_{\Delta^*}$.
This collection of algebraic data $(\mathcal{H}_\Zb^3,\mathcal{H}^3,\mathcal{F}^\bullet,\nabla,Q)$ defines a polarized, integral VHS of weight 3 over $\Delta^*$.\\

For a small loop $\gamma(t)$ around the singular point $0\in\Delta$ based at some $z\in \Delta^*$, we can lift the class $g_z\in H^3(Y_z;\Zb)$ to a flat section $g(t)\in H^3(Y_{\gamma(t)};\Zb)$ over $[0,1]$ with $g(0)=g$.
The monodromy operator $M:\mathcal{H}^3_\Zb\rightarrow \mathcal{H}^3_\Zb$ is defined by $M(g)=g(1)$ in each fiber and guaranteed to be quasi-unipotent by the monodromy theorem.
Assuming that $M$ is unipotent, the monodromy logarithm $N:=\log(M):\mathcal{H}^3_\Qb\rightarrow \mathcal{H}^3_\Qb$ is a nilpotent operator on $\mathcal{H}^3_\Qb=\mathcal{H}^3_\Zb\otimes \Qb$ which induces the unique increasing monodromy weight filtration $W_\bullet :=W_\bullet(N)$ of the form $W_{-1}=\{0\}\subset W_0\subset \cdots \subset W_6=\mathcal{H}^3_\Qb$.
It satisfies the properties 
\begin{equation}\label{monodromyprop}
	N(W_k)\subset W_{k-2},\qquad N^k:\text{Gr}_{3+k}^W\overset{\sim}{\longrightarrow} \text{Gr}_{3-k}^W,
\end{equation}
where $\text{Gr}_{k}^W=W_k/W_{k-1}$ are the graded pieces for $k=0,...,3$.
The monodromy logarithm can be employed to define the untwisted local system with corresponding connection
\begin{equation}\label{untwisting}
	\widetilde{\mathcal{H}}^3_\Zb:=\exp\left(-\frac{\log(z)}{2\pi i }N\right)\mathcal{H}^3_\Zb,\qquad \nabla^c:=\nabla+\frac{N}{2\pi i} \,\frac{dz}{z},
\end{equation}
from which Deligne's canonical continutation $\widetilde{\mathcal{H}}^3:=\widetilde{\mathcal{H}}^3_\Zb\otimes \mathcal{O}_{\Delta}\rightarrow \Delta$ over the puncture can be constructed. 
As a consequence of the nilpotent orbit theorem \cite{schmid_variation_1973}, the Hodge filtration also extends to holomorphic subbundles $\widetilde{\mathcal{F}}^p\subset\widetilde{\mathcal{H}}^3=\widetilde{\mathcal{H}}^3_\Zb\otimes\mathcal{O}_\Delta$, with limiting filtration $F^\bullet_0\subset \widetilde{\mathcal{H}}^3_0$.
This canonical extension of the Hodge-theoretic data allows to assign a meaningful limit to the VHS at the puncture that carries information about the degeneration of the fiber $Y_0$ over $0\in \Delta$.
For this purpose, we untwist a multivalued basis $\{g_i\}$ of $\mathcal{H}^3_\Zb$ via \eqref{untwisting} and consider the local system $\widetilde{\mathcal{H}}^3{}_{\Zb,0}$ generated by $\widetilde{g}_i(0)$.
The collection of data $(\widetilde{\mathcal{H}}{}^3_{\Zb,0},\widetilde{\mathcal{H}}{}^3_0,F^\bullet_0,W_\bullet)$ defines a MHS called the \textit{limiting mixed Hodge structure} (LMHS) of $\mathcal{H}^3$.\\

In the given situation, the LMHS can be classified in terms of the monodromy behavior around the puncture \cite{green_neron_2007}, where of particular interest in the context of mirror symmetry is the case of a \textit{maximal degeneration}, characterized by maximally unipotent monodromy (MUM) with $(M-id)^4=0$ and $(M-id)^3\neq0$ at $z=0$. 
Here, the LMHS is Hodge-Tate of the form
\begin{equation}\label{LMHS}
\xymatrix{
	\Zb(-3) \ar[r]^-N & \Zb(-2) \ar[r]^-N & \Zb(-1) \ar[r]^-N & \Zb(0)
}
\end{equation}
with $W_{2i}=\text{ker}\,N^{i+1}$ and $\text{Gr}^W_{2i}\cong \Zb(-i)$ for $i=0,...,3$ while otherwise $\{0\}$. 
In a neighborhood around the puncture, the limiting Hodge filtration can be extended to a filtration that is constant with respect to \eqref{untwisting}, leading to a VHS
\begin{equation}
	\mathcal{H}^3_\text{nilp}:=(\mathcal{H}^3_\Zb,\mathcal{H}^3,\exp(-\frac{\log(z)}{2 \pi i}N)\,F^\bullet_0,\nabla,Q),
\end{equation}
called the \textit{nilpotent orbit}, which approximates the original VHS asymptotically.
Griffiths transversality follows from $N(F^p_0)\subset F^{p-1}_0$ and together with the weight filtration $\mathcal{W}_\bullet=W_\bullet \otimes \mathcal{O}_{\Delta^*}$ the nilpotent orbit defines a Hodge-Tate VMHS on $\Delta^*$ with LMHS \eqref{LMHS}.
Indeed, following Deligne \cite{deligne_local_1997}, the filtration $\mathcal{W}_\bullet$ pairs up also with the original Hodge filtration such that the data $(\mathcal{H}^3_\Zb,\mathcal{H}^3,\mathcal{F}^\bullet,\mathcal{W}_\bullet,\nabla,Q)$ defines a polarized, integral VMHS of Hodge-Tate type in a neighborhood $\Delta^*$ of the MUM-point.
Its periods consist of a $\nabla^c$-constant part encoded in the nilpotent orbit, as well as holomorphic components vanishing at $0\in \Delta$ \cite{cattani_eduardo_degenerating_1989}.\\

We denote by $\Omega\in\Gamma(\mathcal{F}^3,\Delta^*)$ a choice of nonvanishing holomorphic 3-form on $\mathcal{Y}$.
For any multivalued flat section $g$ of $\mathcal{H}^3_\Cb$ the periods $Q(g,\Omega)$ are holomorphic functions on $\Delta^*$ satisfying a Picard-Fuchs differential equation of generalized hypergeometric type \cite{doran_mirror_2006}.
In a neighborhood of the MUM-point, it can be expressed in terms of a canonical logarithmic vector field via
\begin{equation}
	D_{\text{PF}}(\theta)\,Q(g,\Omega)=0, \qquad \theta=2\pi i\,\frac{d}{d\log(q)}.
\end{equation}
While the periods generally diverge when approaching the puncture, the \textit{limiting period} $Q(\widetilde{g}(0),\Omega)$ defined in terms of the untwisted local system is well defined and determines the LMHS at $z=0$.\\

We can find integral flat generators $g_i\in W_{2i}\cap \mathcal{H}^3_\Zb$ and a basis $\{e_j\}:=\{e_3,e_2,e_1,e_0\}$ with $e_j\in\mathcal{F}^j$, which lead to a structure that can be reinterpreted from an A-model point of view.
Starting with the holomorphic 3-form $e_3=\Omega$ and the generator $g_0$ Poincar\'e dual to the minimal integral vanishing cycle, the function $\varpi_0=Q(g_0,e_3)$ is up to a constant the unique fundamental period which is analytic at $z=0$.
Assuming the monodromy is \textit{small} \cite{morrison_mirror_1993}, there exists a further generator $g_1$ with period $\varpi_1=Q(g_1,e_3)$ that transforms according to $M(g_1)=g_1+g_0$ such that 
\begin{equation}\label{mirrormap}
	q(z):=\exp(2\pi i t),\qquad t=\frac{\varpi_1}{\varpi_0},
\end{equation}
is a well-defined function that serves as the unique canonical coordinate on $\Delta^*$.
Following \cite{schwarz_integrality_2017}, we can complete $\{g_0,g_1\}$ to an integral flat basis $\{g_i\}:=\{g_3,g_2,g_1,g_0\}$ with periods $\varpi_i=Q(g_i,e_3)$ in which the full monodromy and its logarithm are represented by the matrices
\begin{equation}\label{monodromyandlogarithm}
M=
\begin{pmatrix}
1 & 0 & 0 & 0\\
-1 & 1 & 0 & 0\\
0 & \kappa & 1 & 0\\
-\frac{a+2\kappa}{12} & \kappa & 1 & 1
\end{pmatrix},\qquad
N=
\begin{pmatrix}
0 & 0 & 0 & 0\\
-1 & 0 & 0 & 0\\
\frac{\kappa}{2} & \kappa & 0 & 0\\
-\frac{a}{12} & \frac{\kappa}{2} & 1 & 0
\end{pmatrix},\qquad \kappa,a\in \Zb,
\end{equation}
and the polarization form is given by 
\begin{equation}\label{polarizationform}
Q=
\begin{pmatrix}
 0 & 0 & 0 & 1\\
 0 & 0 & 1 & 0\\
 0 & -1 & 0 & 0\\
 -1 & 0 & 0 & 0
\end{pmatrix}.
\end{equation}
Choosing \eqref{mirrormap} as the canonical coordinate and normalizing $e_3$ such that $\varpi_0(z)=1$, the flat basis $\{g_i\}$ is related to the Hodge basis $\{e_j\}$ by
\begin{equation}\label{canonicalbasis}
	\begin{split}
&g_0=e_0,\quad
g_1=e_1+ \log q\,e_0,\quad
g_2=e_2+ \theta^2\mathcal{F} \,e_1+ \theta\mathcal{F} \,e_0\\
g_3&=e_3- \log q\,e_2+\left(\theta\mathcal{F}-\log q \,\theta^2 \mathcal{F} \right) e_1+\left(2\mathcal{F}-\log q\,\theta \mathcal{F}\right)e_0,
\end{split}
\end{equation}
in terms of the prepotential
\begin{equation}\label{prepotential}
	\mathcal{F}=\frac{1}{(2\pi i)^3}\frac{\kappa}{6}\log(q)^3+\frac{1}{(2\pi i)^2}\frac{\kappa}{4}\log(q)^2-\frac{1}{(2\pi i)}\frac{a}{24}\log(q)+f(q).
\end{equation}
Here, $f(q)$ is a single valued function, which extends holomorphically over the puncture
\begin{equation}\label{Bmodelholomorphic}
	f(q)=\frac{\widetilde{b}\,\zeta(3)}{(2\pi i)^3}+\frac{1}{(2\pi i)^3}\sum_{d=1}^\infty \widetilde{N}_d\,q^d,\qquad \widetilde{b}\in \Qb.
\end{equation}
When the family $\mathcal{Y}$ is defined over $\Qb$, the coefficients $\widetilde{N}_d\in\Qb$ of the $q$-series expansion are generally rational numbers.

\begin{remark}\label{fourloop}
The constant term of \eqref{Bmodelholomorphic} determines the limit of the period $\varpi_3$
\begin{equation}
	\lim_{q\rightarrow 0}\varpi_3:=Q(\widetilde{g}_3(0),e_3)=\frac{b\,\zeta(3)}{(2\pi i)^3},\quad b=2\widetilde{b},
\end{equation}
in the LMHS and corresponds to an extension class in $\text{Ext}^1_{\text{MHS}}(\Zb(-3),\Zb(0))=\Cb/\Zb(3)$, whose theoretical origin is explained in \cite{green_neron_2010}. From a physical perspective, it can be interpreted as a four-loop correction to the sigma model metric on the corresponding A-model Calabi-Yau background \cite{candelas_pair_1991,grisaru_four-loop_1986}.
\end{remark}

With the logarithmic vector field $\theta$, the Gau\ss-Manin connection $\nabla(\theta):=\nabla_t$ in the basis $\{e_j\}$ is fully determined by the prepotential in terms of the \textit{Yukawa coupling} $\mathfrak{C}$,
\begin{equation}
\nabla_t
= d + 
\begin{pmatrix}
	0 & 0 & 0 & 0\\
	1 & 0 & 0 & 0\\
	0 & -\mathfrak{C} & 0 & 0 \\
	0 & 0 & -1 & 0
\end{pmatrix}
\otimes \frac{dq}{2\pi i\,q},\qquad 
\mathfrak{C}=\theta^3\mathcal{F}=Q(\nabla_t^3\,e_3,e_3),
\end{equation}
which has the role of the three-point correlation function in the closed B-model.

\subsection{A-model Variations of Hodge Structure}

Closed string mirror symmetry can be established by a reconstruction of these Hodge-theoretic structures from objects that are inherent to the associated A-model geometry \cite{morrison_mathematical_1997}.
Let $X$ be a simply connected and projective Calabi-Yau threefold with Hodge numbers $h^{i,i}=1$ for $i=0,...,3$. On the even dimensional cohomology 
\begin{equation}
	H^{\text{even}}:=H^{\text{even}}(X;\Cb)=H^{0,0}(X)\oplus H^{1,1}(X)\oplus H^{2,2}(X)\oplus H^{3,3}(X)
\end{equation}
we define the decreasing A-model Hodge filtration 
\begin{equation}
	F^p=\bigoplus_{i\leq 3-p}H^{i,i}(X)\subset H^{\text{even}}
\end{equation}
with $F^3\subset F^2\subset F^1\subset F^0=H^{\text{even}}$, inspired by the characteristic symmetry of Hodge numbers \eqref{mirroring}.
For $t\in \mathfrak{H}$ in the upper half plane, we denote by $\omega=B+iJ=t[H]$ the complexified K\"ahler class and consider a region of the K\"ahler moduli space $\overline{\mathcal{K}}(X)$ given by a punctured disk $\Delta^*$ parametrized by $q=e^{2\pi i t}$ around a large radius limit point $q=0$.
In order to construct an object that mirrors the VMHS coming from $\mathcal{Y}$ we define the vector bundle $\mathcal{H}^{\text{even}}:=H^{\text{even}}\otimes \mathcal{O}_{\Delta^*}$ to which we can extend the filtration by $\mathcal{F}^p:=F^p\otimes \mathcal{O}_{\Delta^*}$. 
The antisymmetric polarization form $Q(\cdot,\cdot):\mathcal{H}^\text{even}\otimes \mathcal{H}^\text{even}\rightarrow \mathbb{C}$ is again induced from the cup-product on $H^{\text{even}}$ and given by
\begin{equation}
Q(\alpha, \beta) =
(-1)^i\int_X\alpha\cup \beta ,\quad \alpha\in H^{i,i}(X),\quad \beta\in H^{3-i,3-i}(X).
\end{equation}
In the basis $\{e_j\}:=\{e_3,e_2,e_1,e_0\}$ given by
\begin{equation}
e_3=[X],\quad e_2=[H],\quad e_1=-[\ell],\quad e_0=[p]
\end{equation}
the polarization form is represented by the matrix \eqref{polarizationform} with entries $Q_{ij}=Q(e_i,e_j)$ and coefficients of the inverse matrix denoted by $Q^{ij}=(Q_{ij})^{-1}$.\\

We denote by $GW_\beta(\alpha_1,\ldots,\alpha_n)$ the Gromov-Witten invariant of degree $\beta\in H_2(X;\Zb)$ with $n$ interior constraints given by cycles Poincar\'e dual to $\alpha_k\in H^2(X)$ and consider the Gromov-Witten potential
\begin{equation}
\Phi=\frac{1}{(2\pi i )^3}\sum_{\beta\in H_2(X;\Zb)}GW_\beta\, q^\beta=\frac{1}{6}\int_X\omega\cup \omega\cup \omega+\Phi_h=\frac{\kappa}{6}t^3+\Phi_h.
\end{equation}
Here, $\kappa=\int_XH\cup H\cup H$ is the classical triple intersection number and 
\begin{equation}\label{GWpot}
\Phi_h=	\frac{1}{(2\pi i )^3}\sum_{\beta\in H_2(X;\Zb)\setminus\{0\}}GW_\beta\, q^\beta,\qquad q^\beta=\exp(2\pi i \int_\beta \omega),
\end{equation}
is the quantum part of the potential, which we will assume to converge and think of as a holomorphic function on $\Delta^*$.
The Dubrovin connection on $\mathcal{H}^{\text{even}}$ is defined in terms of the small quantum product $e_2*(-):H^{\text{even}}(X)\rightarrow H^{\text{even}}(X)$ with the hyperplane class $e_2=[H]$, i.e. 
\begin{equation}
	\nabla_t(e_k)=e_2*e_k=\sum_{l,m}\sum_{\beta\in H_2(X;\Zb)}GW_\beta(e_2,e_k,e_l)\,q^\beta\,Q^{lm}\,e_m,
\end{equation}
whose energy zero contribution $e_2*e_k|_{q=0}=e_2\cup e_k\in H^{\text{even}}(X)$ corresponds to the ordinary cup product.
On the bundle $\mathcal{H}^{\text{even}}$ with Hodge basis $\{e_j\}$ the action of the Dubrovin connection is given by
\begin{equation}
\nabla_t :=d+\Big([H]*\Big)\otimes dt
= d + 
\begin{pmatrix}
	0 & 0 & 0 & 0\\
	1 & 0 & 0 & 0\\
	0 & -\Phi^{\prime\prime\prime} & 0 & 0 \\
	0 & 0 & -1 & 0
\end{pmatrix}
\otimes dt.
\end{equation}
It records the variation of the small quantum product with respect to the K\"ahler parameter and the analog of the Yukawa coupling is here identified as the closed A-model three-point correlation function
\begin{equation}
	\Phi^{\prime\prime\prime}=\sum_{\beta}GW_\beta(H,H,H)\,q^\beta=\int_XH*H*H=Q(\nabla_t^3e_3,e_3),
\end{equation}
by means of the Kontsevich-Manin axioms \cite{kontsevich_gromov-witten_1994}. 
Any $\alpha\in H^{2k}(X)$ for $k\leq n-p$ can be regarded as a section of $\mathcal{F}^p$ and the Griffiths transversality condition $\nabla(\mathcal{F}^p)\subset \mathcal{F}^{p-1}\otimes \Omega^1_{\Delta*}$ follows from the fact that $[H]*\alpha\in H^{2k+2}(X)$ is a section of $\mathcal{F}^{p-1}$. 
In the one-parameter case, flatness of $\nabla$ is an immediate consequence of associativity and commutativity of the small quantum product.
In the notation of Doran-Kerr \cite{doran_algebraic_2014}, the complex local system $\mathcal{H}^{\text{even}}_\Cb=\text{ker}(\nabla)$ can be systematically constructed in terms of $\Phi$ and the basis $\{e_j\}$ by setting 
\begin{equation}\label{Clocalsystem}
\begin{split}
	\widetilde{\sigma}(e_0):=e_0,\quad &\widetilde{\sigma}(e_1):=e_1,\quad \widetilde{\sigma}(e_2):=e_2+\Phi_h^{\prime\prime}\, e_1+\Phi_h^{\prime}\, e_0,\\
	&\widetilde{\sigma}(e_3):=e_3+\Phi_h^\prime\, e_1+2\Phi_h\, e_0
\end{split}
\end{equation}
and then defining the quantum deformed classes by $g_i=\sigma(e_i):=\widetilde{\sigma}(e^{-\omega}\cup e_i)$. This leads to a flat basis which corresponds to a complex solution of the quantum differential equation defined by the A-model connection \cite{cox_mirror_1999}.

\begin{remark}\label{WDVV}
In the one-parameter setting, flatness of the Dubrovin connection is an automatic consequence of the algebraic properties of the operation $[H]*(-)$.
In the case $h^{1,1}(X)=n$ of several Kähler parameters, we denote by
\begin{equation}
	\omega=\sum_{i=1}^{n}t_i\,[H_i]\longmapsto q=(q_1,\ldots,q_n)=\left(e^{2\pi i t_1},\ldots ,e^{2\pi i t_n}\right)\in \left(\Delta^*\right)^n,\quad t_i\in \mathfrak{H},
\end{equation}
the Kähler class and corresponding coordinates of the Kähler moduli space around a small punctured polydisk around the large radius limit point.
The A-model curvature is now given by
\begin{equation}
R_\nabla(\del_i,\del_j)\big([H_k]\big)=[H_i]*\big([H_j]*[H_k]\big)-[H_j]*\big([H_i]*[H_k]\big),
\end{equation}
and vanishes by commutativity and associativity of the small quantum product.
While commutativity is a given on $H^{\text{even}}$, associativity leads to constraints on the Gromov-Witten invariants which can be formulated as a system of partial differential equations satisfied by the Gromov-Witten potential
\begin{equation}\label{WDVVequations}
	\sum_{a,b}\del_a\del_i\del_j\,\Phi \cdot Q^{ab}\cdot  \del_b\del_k\del_l\,\Phi=\sum_{a,b}\del_a\del_i\del_k\,\Phi \cdot Q^{ab}\cdot  \del_b\del_j\del_l\,\Phi,\quad \text{for all } i,j,k,l.
\end{equation}
\vspace{-0.5cm}
\begin{figure}[H]
	\centering
  \includegraphics[width=0.75\textwidth]{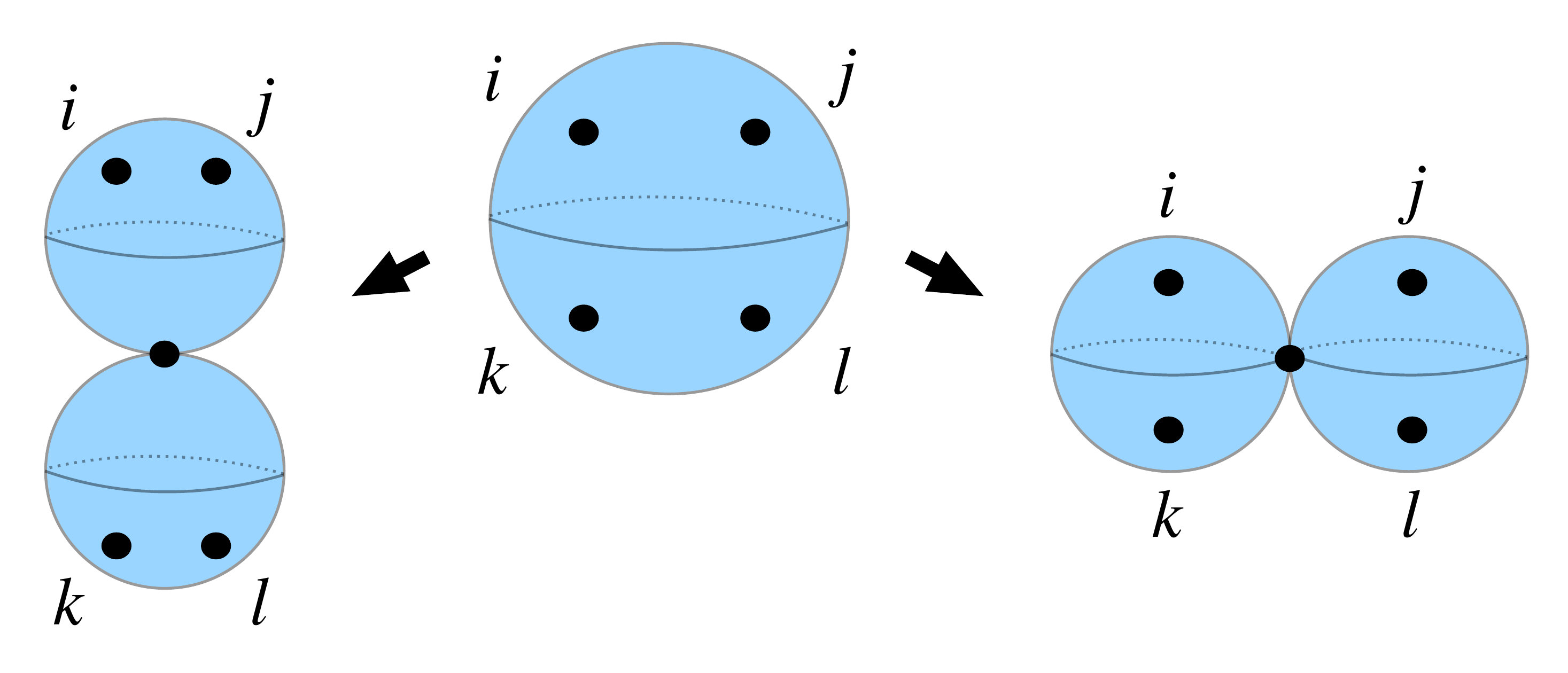}
\end{figure}
These are known as the Witten-Dijkgraaf-Verlinde-Verlinde (WDVV) equations and can be understood as an equivalence of Gromov-Witten invariants associated to different boundary strata of the moduli space of stable maps with four marked points \cite{witten_structure_1990, dijkgraaf_notes_1991,dijkgraaf_topological_1991}. 
\end{remark}

In order to study the asymptotics of the A-model periods $Q(g_i,e_j)$ we turn again to the monodromy $M:\mathcal{H}^{\text{even}}_\Cb\rightarrow \mathcal{H}^{\text{even}}_\Cb $ around the puncture.
Its logarithm at $q=0$ can be computed from the residue $\text{Res}_{q=0}(\nabla)$, which is seen to coincide with the energy zero contribution to the quantum product \cite{cox_mirror_1999}
\begin{equation}
	\text{Res}_{q=0}(\nabla)=\frac{1}{2\pi i}\left(\sum_{l,m}GW(e_j,e_k,e_l)\,Q^{lm}\,e_m\right)=\frac{1}{2\pi i}\left(\sum_{l,m}\left(\int_X e_j\cup e_k\cup e_l\right)\,Q^{lm}\,e_m\right).
\end{equation}
The monodromy logarithm $N_{q=0}=-2\pi i\,\text{Res}_{q=0}(\nabla)$ is therefore given by the cup product $-[H]\cup(-): H^{\text{even}}(X)\rightarrow  H^{\text{even}}(X)$, which in the basis $\{e_j\}$ is represented by the matrix
\begin{equation}
N_{q=0}=
\begin{pmatrix}
0 & 0 & 0 & 0\\
-1 & 0 & 0 & 0\\
0 & \kappa & 0 & 0\\
0 & 0 & 1 & 0	
\end{pmatrix}.
\end{equation}
The weight filtration $W_\bullet:=W(N)_\bullet$ associated to this monodromy is at $0\in \Delta$ given by
\begin{equation}
W_k=\bigoplus_{i\geq 3-k/2}H^{2i},
\end{equation}
invoking the Hard Lefschetz theorem $[H]^k\cup(-):H^{3-k}\xrightarrow{\sim} H^{3+k}$.
We regard the graded pieces $\text{Gr}_k^WH^{\text{even}}=H^{2n-k}$ as having a pure Hodge structure of weight $k$ and denote by $F^\bullet_0=\widetilde{\mathcal{F}}^\bullet_0$ the limit of the trivially extended Hodge filtration $\widetilde{\mathcal{F}}^\bullet=F^\bullet\otimes \mathcal{O}_\Delta$.  
By untwisting the local system according to \eqref{untwisting}, this defines a MHS $(\widetilde{\mathcal{H}}^{\text{even}}_{\Zb,0},\widetilde{\mathcal{H}}_0^\text{even},F^\bullet_0,W_\bullet)$ of Hodge-Tate type at $0\in \Delta$.
Similar to the construction in the B-model, we can extend it to a nilpotent orbit 
\begin{equation}
	\mathcal{H}^{\text{even}}_{\text{nilp}}:=(\mathcal{H}^{\text{even}}_\Cb,\mathcal{H}^\text{even},e^{-tN}F^\bullet_0,\nabla,Q)
\end{equation}
in a small neighborhood around the puncture, with Griffiths transversality following from $N(F_0^p)\subset F_0^{p-1}$ and LMHS as above.
Paired up with the filtration $\mathcal{W}_\bullet=W_\bullet \otimes \mathcal{O}_{\Delta^*}$, this defines a Hodge-Tate VMHS that is constant with respect to the corresponding untwisted connection \eqref{untwisting} and fully determined by the $H^2$-module structure on $H^{\text{even}}$ induced by the cup product.
Again, also the original Hodge filtration leads to a polarized, complex VMHS $(\mathcal{H}^{\text{even}}_\Cb,\mathcal{H}^\text{even},\mathcal{F}^\bullet,\mathcal{W}_\bullet,\nabla,Q)$ of Hodge-Tate type on $ \Delta^*$, in which the additional holomorphic components of the periods are derived from the quantum Gromov-Witten potential \eqref{GWpot}.
This can be viewed as a general consequence of the WDVV equations for a deformation of this $H^2$-module structure to a quantum product defining a \textit{Frobenius module} in the language of Cattani-Fernandez \cite{cattani_asymptotic_2001,cattani_frobenius_2003}.\\

So far the construction of this VMHS in the A-model only involved the specification of a complex local system as a solution to the quantum differential equation defined by the quantum connection. 
A subtle problem arises in the definition of the correct integral local system, which is reflected by the fact that the limiting A-model periods $Q(\widetilde{g}_i,e_j)$ do not reproduce the limiting period matrix of the B-model, including the crucial zeta value $\zeta(3)$.
A direct construction of this integral local system $\mathcal{H}^{\text{even}}_\Zb$ from a pure A-model perspective is due to Iritani \cite{iritani_integral_2009} and Katzarkov-Kontsevich-Pantev \cite{katzarkov_hodge_2008}.
It starts with a basis $\{\xi_i\}$ of the algebraic K-theory $K^{\text{alg}}_0(X)$ which is symplectic with respect to the Mukai pairing
\begin{equation}
	\langle \xi, \xi^\prime\rangle_{M} =\int_X ch(\xi^\vee \otimes \xi^\prime)\cup Td(X),
\end{equation}
where $Td(X)$ denotes the Todd class of $X$.
From this, the integral quantum deformed classes $g_i:=\sigma\left(\hat{\Gamma}(X)\cup ch(\xi_i)\right)$ are defined in terms of a \enquote{square-root} of $Td(X)$, called the Gamma class
\begin{equation}
	\hat{\Gamma}(X):=\exp\left(\sum_{k\geq 2}\frac{(-1)^k(k-1)!}{(2\pi i )^k}\,\zeta(k)\,ch_k(X)\right)\in H^{\text{even}}
\end{equation}
of $X$. 
For a Calabi-Yau threefold with Chern classes $c_1=0$, $c_2=a\,[\ell]$ and $c_3=b\,[p]$, the Gamma class becomes 
\begin{equation}\label{gammaclass}
	\hat{\Gamma}(X)=1-\frac{\zeta(2)}{(2\pi i)^2}\,c_2+\frac{\zeta(3)}{(2\pi i)^3}\left(c_1c_2-c_3\right)=[X]+\frac{a}{24}[\ell]-\frac{b\,\zeta(3)}{(2\pi i)^3}[p]
\end{equation}
and the right choice of K-theory basis reproduces the flat basis \eqref{canonicalbasis} in the B-model, with prepotential \eqref{prepotential} \cite{da_silva_jr_arithmetic_2016}.
The large radius monodromy and its logarithm in the basis $\{g_i\}:=\{g_3,g_2,g_1,g_0\}$ are given by \eqref{monodromyandlogarithm} and the corresponding limiting periods correctly reproduce the asymptotic behavior of the B-model, where the previously missing values are from an A-model perspective provided by the Gamma class. 
The collection of data $(\mathcal{H}^{\text{even}}_\Zb,\mathcal{H}^\text{even},\mathcal{F}^\bullet,\mathcal{W}_\bullet,\nabla,Q)$ now defines an integral, polarized \textit{A-model VMHS} on $\Delta^*$ associated to $(X,\omega)$.

\subsection{Hodge-Theoretic Mirror Pairs}\label{mirrorpairs}

The two variations of MHS associated to A-model and B-model geometries around suitably chosen limit points in the respective moduli spaces can be used to formulate closed string mirror symmetry in a Hodge-theoretic framework:
A pair of Calabi-Yau threefold families $(X,\omega)$ and $\mathcal{Y}$ is called a \textit{(Hodge-theoretic) mirror pair} if the polarized variations of MHS on $\mathcal{H}^{\text{even}}(X)$ and $\mathcal{H}^3(\mathcal{Y})$ are isomorphic
 \begin{equation}\label{mirrorpair}
\xymatrix{
  \mathcal{H}^{\text{even}} \ar[d] \ar[r]^-{\sim} & \mathcal{H}^3 \ar[d]   \\
  \Delta^*_q \ar[r]^-{m} & \Delta^*_z,
}
\end{equation}
where $\Delta^*_q$ and $\Delta^*_z$ are small neighborhoods around MUM-type boundary points of $\overline{\mathcal{K}}(X)$ and $\overline{\mathcal{M}}(Y)$ related by the \textit{mirror map} $m:q(z)\mapsto z(q)$.
Given a mirror pair, the periods of $\Omega$ in the B-model attain an A-model interpretation as quantum corrected volumes of the integral cycles $g_i$, e.g.
\begin{equation}\label{correctedvolume}
	\theta\mathcal{F}=\frac{\kappa}{2}t^2+\frac{\kappa}{2}t-\frac{a}{24}+\frac{1}{(2\pi i)^2}\sum_{d=1}^\infty d\widetilde{N}_d\,q^d=Q(g_2,e_3),
\end{equation}
and the choice of integral local system corresponds to specifying their asymptotics in terms of the limiting period matrix
\begin{equation}\label{limitingperiods}
	\Pi_{\,q=0}:=Q(\widetilde{g}_i(0),e_j)_{i,j=0,\ldots,3}=
\begin{pmatrix}[1.3]
	1 & 0 & 0 & 0 \\ 
	0 & 1 & 0 & 0 \\
	\frac{a}{24} & - \frac{\kappa}{2} & 1 & 0 \\
	\frac{b\,\zeta(3)}{(2\pi i)^2} & \frac{a}{24} & 0 & 1
\end{pmatrix}
\end{equation}
which determines the LMHS at $q=0$.
The quantum differential equations coming from the Gau\ss-Manin and Dubrovin connections are identified, leading to an equality of the respective A-model and B-model three-point functions under the mirror map.
From this, the Yukawa coupling of $\mathcal{Y}$
\begin{equation}
	\mathfrak{C}(q(z))  =\kappa+\sum_{d=1}^\infty d^3\widetilde{N}_d\,q^d=\Phi^{\prime\prime\prime}(q),
\end{equation}
computes the Gromov-Witten invariants of the mirror $X$ with $\widetilde{N}_d=GW_\beta$ for $\beta=d[\ell]$.
While in general $\widetilde{N}_d\in \Qb$ for families defined over the rationals, an integral expansion of the holomorphic part of the Gromov-Witten potential is given by a multicover formula
\begin{equation}\label{multicoverformula}
	\Phi_h=\frac{1}{(2\pi i)^3}\sum_{d=1}^\infty \widetilde{N}_d\,q^d=\frac{1}{(2\pi i)^3}\sum_{d=1}^\infty N_d\,\text{Li}_3(q^d):=\frac{1}{(2\pi i)^3}\sum_{d,k=1}^\infty \widetilde{N}_d\,\frac{q^{dk}}{k^3},\quad N_d\in \Zb,
\end{equation}
where the instanton numbers $N_d$ are closely related to the number of degree $d$ curves lying in $X$.

\begin{example}\label{closedexample}

The most well-known example of a mirror pair was orignally studied in \cite{candelas_pair_1991} and consists of the Fermat quintic
\begin{equation}\label{fermatquintic}
		X=\left\{x_1^5+x_2^5+x_3^5+x_4^5+x_5^5=0\right\}\subset \Pb^4
\end{equation} 
on the A-side together with the mirror quintic family $\mathcal{Y}$ on the B-side, given by a smooth resolution $\widetilde{Y_z/G}$ of 
\begin{equation}
	Y_z=\left\{W=x_1^5+x_2^5+x_3^5+x_4^5+x_5^5-5\psi\, x_1x_2x_3x_4x_5 =0 \right\}\subset \Pb^4,\quad z=(5\psi)^{-5},
\end{equation}
with the Greene-Plesser group $G\cong(\Zb/5\Zb)^5$ of symmetries leaving $Y_z$ invariant.
The statement of the Mirror Theorem \cite{lian_mirror_1997} for the quintic is that $(X,\omega)$ and $\mathcal{Y}$ form a mirror pair.
This implies that the Picard-Fuchs operator of $\mathcal{Y}$ \cite{doran_mirror_2006}
\begin{equation}
	D_{\text{PF}}\,(-)=(2\pi i)^2\left[\theta^4-5z\prod_{k=1}^4\left(5\theta+k\right)\right]\,(-)
\end{equation}
associated to the standard choice of holomorphic 3-form 
\begin{equation}\label{standardform}
	\Omega_z=\left(\frac{5}{2\pi i}\right)^3\text{Res}_{\,W=0}\frac{\sum_{i=1}^5(-1)^ix_i\,dx_1\wedge \cdots \wedge \widehat{dx_i}\wedge\cdots \wedge dx_5}{W}
\end{equation}
leads to a Yukawa coupling
\begin{equation}
	\mathfrak{C}=5+2875\, q+4876875\,q^2+8564575000\,q^3+\cdots 
\end{equation}
computing the Gromov-Witten invariants of $X$ with 
\begin{equation}
	(2\pi i)^3\,\Phi_h  =2875\,q+\frac{4876875}{8} \,q^2 +  \frac{8564575000}{27}  \,q^3 + \cdots .
\end{equation}
In terms of the limiting periods of $\Omega$ the LMHS is specified by by
\begin{equation}
	Q(\widetilde{g}_3(0),e_2)=\frac{25}{12},\qquad Q(\widetilde{g}_3(0),e_3)=-\frac{25i\,\zeta(3)}{\pi^3},
\end{equation}
where the constants $a=50$ and $b=-200$ of \eqref{limitingperiods} arise in the A-model as constituents of the Gamma class \eqref{gammaclass}.
\end{example}

\section{B-Branes and Normal Functions}\label{section3}

\subsection{Extensions from Algebraic Cycles}
We consider for each fiber $Y_z$ of the family $\pi:\mathcal{Y}\rightarrow \Delta^*$ an algebraic curve $i:C_{z}\hookrightarrow Y_z$ varying with $z\in \Delta^*$ such that $\pi\circ i:\mathcal{C}\rightarrow \Delta^*$ is a smooth family which admits a semi-stable continuation over $\Delta$.
When the family of cycles $\mathcal{C}\in \text{CH}^2(\mathcal{Y})_{\text{hom}}$ is homologically trivial, meaning that $i_*([C_z])=0\in H_2(Y_z;\Zb)$ for each $z\in \Delta^*$, we can associate to the pair $(\mathcal{Y},\mathcal{C})$ a VMHS that can be realized as an extension
\begin{equation}\label{extension}
0 \longrightarrow \mathcal{H}^3\longrightarrow \hat{\mathcal{H}}{}^3 \longrightarrow  \mathcal{I}\longrightarrow  0,\qquad \hat{\mathcal{H}}{}^3\in \text{Ext}_{\text{VMHS}}(\mathcal{I},\mathcal{H}^3),
\end{equation}
of the pure VHS $\mathcal{I}$ of weight 4 by $\mathcal{H}^3$.
The extension is fiberwise coming from the exact sequence
\begin{equation}
	0 \longrightarrow  H^3(Y_z;\Cb) \longrightarrow  H^3(Y_z\setminus C_z;\Cb) \longrightarrow \text{ker}\left(H^0(C_z;\Cb)\overset{i_!}{\longrightarrow} H^4(Y_z;\Cb)\right) \longrightarrow 0,
\end{equation}
where $i_!:=PD\circ i_*\circ PD$ denotes the Gysin map.
The extension $0\rightarrow \mathcal{H}^3_\Zb\rightarrow \hat{\mathcal{H}}{}^3_\Zb\rightarrow \mathcal{I}_\Zb\rightarrow 0$  of local systems arises geometrically from the short exact sequence
\begin{equation}
0 \rightarrow  H_3(Y_z;\Zb) \rightarrow  H_3(Y_z,C_z;\Zb) \rightarrow \text{ker}\big(H_2(C_z;\Zb)\rightarrow H_2(Y_z;\Zb)\big) \rightarrow 0\\
\end{equation}
by Poincar\'e duality with
\begin{equation}
	\text{ker}\big(H_2(C_z;\Zb)\rightarrow H_2(Y_z,\Zb)\big)\cong H^0(C_z;\Zb),
\end{equation}
after a suitable Tate-twist such that $\mathcal{I}_{\Zb,z}=\Zb(-2)$ is Hodge-Tate of pure Hodge type (2,2).
The extended Hodge filtration $\hat{F}^p$ on the cohomology of the complement is coming from the grading of the sheaf $\Omega^\bullet_{\mathcal{Y}}\langle \log \mathcal{Z}\rangle$ and satisfies
\begin{equation}
	\faktor{\hat{F}^2}{F^2}=\Zb(-2)^d,\qquad \faktor{\hat{F}^p}{\hat{F}^{p+1}}= \faktor{F^p}{F^{p+1}}.
\end{equation}
We define the filtration $\hat{\mathcal{F}}^p=\hat{F}^p\otimes \mathcal{O}_{\Delta^*}$ of holomorphic subbundles on $\hat{\mathcal{H}}{}^3=\hat{\mathcal{H}}{}^3_\Zb\otimes \mathcal{O}_{\Delta^*}$ and the Gau\ss-Manin connection canonically extends to $\hat{\mathcal{H}}{}^3$ by specifying the horizontal sections according to $\nabla(\hat{\mathcal{H}}{}^3_\Cb)=0$.
The only non-vanishing components of the weight filtration $\mathcal{W}_\bullet$ on $\hat{\mathcal{H}}{}^3_\Qb$ are set to $\mathcal{W}_3=\mathcal{H}^3_\Qb$ and $\mathcal{W}_4=\hat{\mathcal{H}}^3_\Qb$, such that the data  $(\hat{\mathcal{H}}{}^3_\Zb,\hat{\mathcal{H}}{}^3,\hat{\mathcal{F}}^\bullet,\mathcal{W}_\bullet,\nabla,Q)$ defines an integral, polarized VMHS that fits as an extension in the exact sequence \eqref{extension}.\\

By a theorem of Carlson \cite{carlson_extensions_1979}, extensions of variations of HS of this kind are classified by the intermediate Jacobian fibration
\begin{equation}
\text{Ext}_{\text{VMHS}}^1(\mathcal{I}_\Zb,\mathcal{H}^3)\cong \mathcal{J}^2(\mathcal{H}^3):=\frac{\mathcal{H}^3}{\mathcal{F}^2+\mathcal{H}^3_\Zb},
\end{equation}
in terms of a normal function $\nu$ of $\mathcal{H}^3$ given by a holomorphic section of $\mathcal{J}^2(\mathcal{H}^3)$, satisfying the horizontality condition $\nabla \widetilde{\nu}\in \mathcal{F}^1\otimes \Omega_{\Delta^*}$, where $\widetilde{\nu}$ is any lift of $\nu$ to $\mathcal{H}^3$.
The section which determines a given extension can be explicitly described by taking both an integral lift $h$ and an $\hat{\mathcal{F}}^2$-lift $f$ of the generator of $\mathcal{I}_\Zb$ in $\hat{\mathcal{H}}{}^3$, such that the normal function is defined as the class of the difference $[h-f]\in\mathcal{J}^2(\mathcal{H}^3)$.
Because $\nabla^2=0$ and modulo the ambiguity of the $\hat{\mathcal{F}}^2$-lift $f$, the derivatives $\nabla\widetilde{\nu}$ characterize the $\Cb$-VMHS locally by the class
\begin{equation}\label{infinvariant}
	\big[\nabla\widetilde{\nu}\big]\in\frac{\text{ker}\big(\nabla:\mathcal{F}^1\otimes \Omega^1_{\Delta^*}\rightarrow \mathcal{F}^0\otimes \Omega^2_{\Delta^*}\big)}{\text{im}\big(\nabla:\mathcal{F}^2\rightarrow \mathcal{F}^1\otimes \Omega^1_{\Delta^*}\big)},
\end{equation}
called \textit{Griffiths' infinitesimal invariant} of $\mathcal{H}^3$ \cite{voisin_hodge_2002}.
Its vanishing implies that the normal function is locally constant, meaning that locally there always exists a lift $\widetilde{\nu}\in \mathcal{H}^3$ satisfying $\nabla\widetilde{\nu}=0$.\\

In the geometric situation of a homologically trivial algebraic cycle,
Poincar\'e duality implies an isomorphism
\begin{equation}\label{normalfunction}
	\mathcal{J}^2(\mathcal{H}^3)\cong \faktor{\left(\mathcal{F}^2\right)^\vee}{\left(\mathcal{H}^3_\Zb\right)^\vee},
\end{equation}
which allows to characterize the extension by a functional $\nu_{\mathcal{C}}:\mathcal{F}^2\rightarrow \mathcal{O}_{\Delta^*}$ defined modulo integral periods.
To define its action in each fiber $Y_z$, we choose any relative 3-chain $\Gamma_{z}\in H_3(Y_z,C_{z};\Zb)$ with boundary $\del \Gamma_{z}=C_{z}$ and define the integral lift as a current
\begin{equation}
	h=(2\pi i)^2\delta_{\Gamma_{z}}\in H^3\big(Y_z\setminus C_{z} ;\Zb(2)\big)\cong H_3\big(Y_z,C_{z};\Zb(-1)\big).
\end{equation}
The $\hat{\mathcal{F}}^2$-lift is given by another current $f\in \hat{F}^2$ with $df=(2\pi i)^2\delta_{C_{z}}$.
Their difference $h-f$ is a class in $H^3(Y_z)$, well-defined modulo $F^2H^3(Y_z)+H^3(Y_z;\Zb(2))$ and the equivalence class in the intermediate Jacobian $[h-f]\in J^2(H^3)=\mathcal{J}^2(\mathcal{H}^3)_z$ gives rise to a functional defined up to integral periods by
\begin{equation}
	\nu_{\mathcal{C}}(\eta)_z:=\int_{Y_z} \left(h -f\right)\wedge \eta =(2\pi i)^2\int_{\Gamma_{z}} \eta,\qquad \eta\in F^2H^3(Y_z),
\end{equation}
where the second term vanishes by type considerations.
The \textit{Abel-Jacobi map}
\begin{equation}
	AJ:\text{CH}^2(\mathcal{Y})_{\text{hom}} \longrightarrow \mathcal{J}^2(\mathcal{H}^3),\qquad 
	\mathcal{C} \longmapsto \nu_{\mathcal{C}},
\end{equation}
is defined by assigning this functional to the associated homologically trivial algebraic cycle.
Vanishing of the infinitesimal invariant $[\nabla\widetilde{\nu}_\mathcal{C}]=0$ is then geometrically reflected by the cycle $\mathcal{C}$ being locally constant, i.e. independent of the modular parameter $z$.
The truncated normal function
\begin{equation}
\mathcal{W}_{B}(z)=Q(AJ(\mathcal{C}),e_3)=(2\pi i)^2\int_{\Gamma_{z}} \Omega_z
\end{equation}
can be physically interpreted as the B-brane superpotential associated to the open string vacuum determined by the cycle $\mathcal{C}$.
It satisfies an inhomogeneous version of the Picard-Fuchs equation \cite{morrison_d-branes_2009}
\begin{equation}
	D_{\text{PF}}\,\mathcal{W}_B=\mathcal{J}
\end{equation}
with an inhomogeneity that is additive with respect to the corresponding cycles.\\

Even when $\mathcal{C}$ is not homologically trivial itself, we can define a version of the Abel-Jacobi map depending on the choice of a homologically equivalent cycle $\mathcal{C}_0$ that serves as a fixed reference with $\mathcal{C}-\mathcal{C}_0\in \text{CH}^2(\mathcal{Y})_\text{hom}$.
The normal function is then fiberwise determined by integrating $\eta\in F^2H^3(Y_z)$ over a relative 3-chain $\Gamma_z$ with boundary $\del\Gamma_z=C_z-C_{0,z}$.
We will therefore generally allow the cycle to be reducible, i.e. $C_z=\bigcup_{k}C_{z,k}$ with $k=0,...,d$ and $\mathcal{C}=\bigcup_k\mathcal{C}_k$, assuming that all irreducible components are homologically equivalent to each other, i.e. $[C_{z,k}]-[C_{z,k^\prime}]=0\in H_2(Y_z,\Zb)$, for each pair of indices $k$ and $k^\prime$.
We choose $\mathcal{C}_0\subset \mathcal{C}$ as fixed reference cycle with respect to which homological equivalence is measured and denote by $AJ(\mathcal{C}_k)=\nu_{\mathcal{C}_k}$ the normal function described above, slightly abusing notation.
This determines an extension similar to \eqref{extension}, where $\mathcal{I}_{\Zb,z}=\Zb(-2)^{\oplus d}$.
We interpret the truncated normal functions $\mathcal{W}_k$ as the B-brane superpotential associated to the open string vacuum determined by $\mathcal{C}_k$ relative to the reference $\mathcal{C}_0$.
For any two homologically equivalent cycles $\mathcal{C}_k,\mathcal{C}_{k^\prime}\subset\mathcal{C}$ the differences 
\begin{equation}
	\mathcal{T}_{k,k^\prime}:=\mathcal{W}_k-\mathcal{W}_{k^\prime},\qquad D_{\text{PF}}\,\mathcal{T}_{k,k^\prime}=\mathcal{J}_k-\mathcal{J}_{k^\prime},
\end{equation}
have an interpretation as BPS domain wall tensions between the open string vacua associated to $\mathcal{C}_k$ and $\mathcal{C}_0$.

\subsection{Asymptotics of the Abel-Jacobi Map}\label{AJasymptotics}

When the extended monodromy $\hat{M}:\hat{\mathcal{H}}^3_\Zb\rightarrow\hat{\mathcal{H}}^3_\Zb$ is unipotent, its logarithm $\hat{N}=\log(\hat{M})$ is a nilpotent operator that preserves the filtration $\mathcal{W}_\bullet$ of $\hat{\mathcal{H}}^3$.
In such a situation there exists a relative weight filtration $\hat{\mathcal{W}}_\bullet:=\hat{\mathcal{W}}_\bullet(\hat{N},\mathcal{W}_\bullet)$ on $\hat{\mathcal{H}}^3_\Qb$ such that $\hat{N}(\hat{\mathcal{W}}_k)\subset \hat{\mathcal{W}}_{k-2}$ and $\hat{\mathcal{W}}_\bullet$ induces on each $\text{Gr}^\mathcal{W}_k$ the weight filtration of the map induced by $\hat{N}$.
With respect to this relative filtration, $\mathcal{I}_\Zb$ is generally of weight 4 and the extension can be described in terms of the canonical coordinate around a MUM-point and the Hodge basis $\{e_j\}$.
For any lift $\widetilde{\nu}\in \mathcal{H}^3$ the validity of $Q(\nabla_t \widetilde{\nu},e_3)=0$ is here independent of the $\mathcal{F}^2$-part of the $\hat{\mathcal{F}}^2$-lift $f$, such that maximal degeneracy provides a canonical lift of the normal function given by 
\begin{equation}\label{canonical}
	\widetilde{\nu}=h-f= \mathcal{V}\, e_1 + \mathcal{W}\, e_0, 
\end{equation} 
where $\mathcal{V}$ and $\mathcal{W}$ are the truncated normal functions of $\widetilde{\nu}$.
Both functions are holomorphic on $\Delta^*$ and the horizontality condition corresponds to the vanishing of the coefficient of $e_0$ in
\begin{equation}
\nabla_t\widetilde{\nu}=\nabla_t(h-f)=\theta \mathcal{V}\,e_1+\left(-\mathcal{V}+\theta\mathcal{W}\right)e_0,
\end{equation}
which implies $\mathcal{V}=\theta\mathcal{W}$.
The integral lift becomes part of the extended basis $\{g_i,h\}$ of $\hat{\mathcal{H}}{}^3_\Zb$ and satisfies by definition $\nabla(h)=0$, which further leads to
\begin{equation}\label{derivativeextgenerator}
\nabla_t f= -\theta \mathcal{V}\,e_1=-\theta^2\mathcal{W}\,e_1,
\end{equation}
such that the extension of the Gau\ss-Manin connection is fully determined by the function $\mathcal{W}$.
The open string analog of the Yukawa coupling is therefore given by a certain representative of Griffiths' infinitesimal invariant \eqref{infinvariant} in the canonical coordinate.
Namely, the derivative of the canonical lift $\nabla\widetilde{\nu}$ is fully determined by its $\mathcal{F}^1$ component, which motivates the definition \cite{schwarz_integrality_2017} 
\begin{equation}\label{infinitesimalinvariant}
	\mathfrak{D}:=-Q(\nabla_t\widetilde{\nu},\nabla_t e_3)=Q(\nabla_t^2\widetilde{\nu},e_3)=\theta^2\mathcal{W},
\end{equation}
where the second equality follows from differentiating the horizontality condition.
In particular, the full information of the extension can be recovered from the superpotential and its inhomogeneous Picard-Fuchs equation in the given situation.\\

In case of a collection of cycles $\mathcal{C}=\bigcup_{k}\mathcal{C}_{k}$ with fixed reference $\mathcal{C}_0$ described in the previous section, the monodromy is not guaranteed to be unipotent, as the cycle might branch when encircling the puncture, exchanging the connected components of $\mathcal{C}$.
This implies that $\hat{M}$ has finite order $r$ when restricted to $\mathcal{I}_\Zb$, where we assume for simplicity that all arising orbits are of the same order.
To ensure the existence of a globally well defined cycle, we have to pass to an $r$-fold cover $\hat{z}=z^{1/r}\mapsto z$, branched at $0\in \Delta$, and consider the pullback
\begin{equation}\label{cover}
\xymatrix{
\mathcal{C}\, \ar@{^{(}->}[r]\ar@/_0.5pc/[rd]  & \hat{\mathcal{Y}} \ar[r]\ar[d]  & \mathcal{Y} \ar[d] \\
& \Delta_{\hat{z}}^*  \ar[r] & \Delta_z^*.
}
\end{equation}
On the cover, the correct extended monodromy logarithm is then given by $\hat{N}= \log(\hat{M}^r)$, and after pullback along \eqref{cover}, the degree of the local system $\hat{\mathcal{I}}_\Zb$ is $d/r$.
The extended monodromy logarithm is now generally of the form \cite{green_neron_2010}
\begin{equation}
\hat{N}=
\begin{pmatrix}
	rN & L\\
	0 & 0 
\end{pmatrix},
\end{equation}
with $L\in \text{Hom}_\Zb(\hat{\mathcal{I}}_\Zb,\mathcal{H}^3_\Zb)$. 
While we can find a splitting of the sequence \eqref{extension} over $\Qb$, such that $L=0$ and $\hat{\mathcal{I}}_\Qb$ has weight 3, the integral local system $\hat{\mathcal{I}}_\Zb$ has weight 4 in general.
In the integral basis $\{g_i,h_k\}$, the logarithm $\hat{N}$ is therefore represented by 
\begin{equation}\label{extendedmonodromy}
\hat{N}=
\begin{pmatrix}
0 & 0 & 0 & 0 & 0 & \cdots & 0\\
-r & 0 & 0 & 0 & 0 & \cdots & 0\\
\frac{r\kappa}{2} & r\kappa & 0 & 0 & \lambda_1 & \cdots & \lambda_d\\
-\frac{ra}{12} & \frac{r\kappa}{2} & r & 0 & s_1 & \cdots & s_d\\
 0 & 0 & 0 & 0 & 0 & \cdots & 0\\
 \vdots & \vdots & \vdots & \vdots & \vdots & \ddots & \vdots \\
 0 & 0 & 0 & 0 & 0 & \cdots & 0
\end{pmatrix}, 
\end{equation}
where the extending generator $h_k$ is mapped to $\hat{N}(h_k)=\lambda_k\,g_1+s_k \, g_0\in \mathcal{W}_2$.
Over $\Qb$ we can find a basis such that $\hat{N}(h_k)=s_k\, g_0\in \mathcal{W}_0$ and $h_k\in\hat{\mathcal{W}}_3$, but this change of coordinates cannot be performed while retaining integrality of the generators in general \cite{schwarz_integrality_2017}.
The graded pieces of the filtration $\hat{\mathcal{W}}_\bullet$ then fit into the diagram (cf. \cite{usui_studies_2014})
\begin{equation}\label{Bweightfiltration}
	\xymatrix{
	& \text{Gr}^{\mathcal{W}}_6 \ar[d]_-{rN} \ar[r]^-{\sim} & \text{Gr}^{\hat{\mathcal{W}}}_6  \ar[d]^-{\hat{N}} & & \\
	0 \ar[r] & \text{Gr}^{\mathcal{W}}_4 \ar[d]_-{rN} \ar[r] & \text{Gr}^{\hat{\mathcal{W}}}_4  \ar[d]^-{\hat{N}} \ar[r] & \hat{\mathcal{I}}_\Zb \ar[r] \ar[d]^-{L} & 0\\
	0 \ar[r] & \text{Gr}^{\mathcal{W}}_2 \ar[d]_-{rN} \ar[r] & \text{Gr}^{\hat{\mathcal{W}}}_2 \ar[d]^-{\hat{N}} \ar[r] &  H  \ar[r] & 0\\
	& \text{Gr}^{\mathcal{W}}_0 \ar[r]^-{\sim} & \text{Gr}^{\hat{\mathcal{W}}}_0 , & &
	}
\end{equation}
where horizontal lines are exact sequences and $H$ corresponds to an $r$-torsion group given by the quotient
\begin{equation}
  H:= \faktor{\Zb\langle g_0\rangle }{r\Zb\langle g_0\rangle},
\end{equation}
reflecting the branching behavior of the cycle.
The extended Gau\ss-Manin connection in the basis $\{e_j,f_k\}$ is now given by 
\begin{equation}
\nabla_t 
= d + 
\begin{pmatrix}
	0 & 0 & 0 & 0 & 0 & \cdots & 0\\
	1 & 0 & 0 & 0 & 0 & \cdots & 0\\
	0 & -\mathfrak{C} & 0 & 0 & -\mathfrak{D}_1 & \cdots & -\mathfrak{D}_d \\
	0 & 0 & -1 & 0 & 0 & \cdots & 0\\
	0 & 0 & 0 & 0 & 0 & \cdots & 0\\
	\vdots & \vdots & \vdots & \vdots & \vdots & \ddots & \vdots \\
 0 & 0 & 0 & 0 & 0 & \cdots & 0
\end{pmatrix}
\otimes \frac{dq}{2\pi i\,q},\quad 
\mathfrak{D}_k=\theta^2\mathcal{W}_k=Q(\nabla_t^2\widetilde{\nu},e_3)
\end{equation}
and determined by the Yukawa coupling $\mathfrak{C}$ together with the infinitesimal invariants $\mathfrak{D}_k$ coming from all superpotentials.\\

On the $r$-fold cover we pass to the local coordinate $\hat{q}=q^{1/r}$ and the structure of the monodromy $\hat{N}$ implies that the superpotential is generally of the form
\begin{equation}\label{torsionsuperpotential}
\mathcal{W}_k = \frac{1}{(2\pi i)^2}\frac{\lambda_k}{r^2}\,\log(q)^2+\frac{1}{(2\pi i)}\frac{s_k}{r}\, \log(q) + w_k(q),\qquad \lambda_k\in\Zb,s_k\in H
\end{equation}
with a single valued function
\begin{equation}\label{specialvalue}
	w_k(q)=c_k+\frac{1}{(2\pi i)^2}\sum_{d=1}^\infty \widetilde{n}_d\,q^{d/r}
\end{equation}
extending holomorphically over the puncture.
Here, the constant $c_k$ is related to special values of $L$-functions associated to the algebraic number field $K/\Qb$ over which the cycle is defined \cite{green_neron_2010}, see also \cite{laporte_monodromy_2012,jefferson_monodromy_2014} for examples.
Even for families $\mathcal{Y}$ over $\Qb$, the coefficients $\widetilde{n}_d\in K/\Qb$ of the $q$-series expansion generally lie in this number field and, analogous to the situation of Remark \hyperref[fourloop]{2.1}, the constant term of the expansion is here related to the limit of the Abel-Jacobi map associated to $\mathcal{C}$ in the canonical coordinate.
According to the prescription in \cite{green_neron_2010}, this limit takes values in the \textit{N\'eron-Model}
\begin{equation}\label{neronmodel}
	AJ(C_{\hat{q}}) \xrightarrow{\hat{q}\rightarrow 0}  AJ(C_0)\in G\ltimes\faktor{\Cb}{\Zb(2)},
\end{equation}
with a finite group $G$ generated by cycles limiting to distinct components in the singular fiber $Y_0$ and $\text{Ext}^1_{\text{MHS}}(\Zb(-2),\Zb(0))\cong\Cb/\Zb(2)$ arising as the image of a regulator map acting on the degenerated cycle $C_0$. 
In case of the extension defined by the superpotential \eqref{torsionsuperpotential}, the finite group $G$ contains an $r$-torsion subgroup coming from the covering, and the components of the Abel-Jacobi limit
\begin{equation}\label{AJlimit}
	Q(\widetilde{h}(0),e_2)=\frac{s_k}{r},\qquad
	Q(\widetilde{h}(0),e_3)=c_k,
\end{equation}
can be expressed in terms of the untwisted local system which determines the LMHS of the extension.

\section{Extended A-model Variations of Hodge Structure}\label{section4}

\subsection{Open Gromov-Witten Theory and A-Brane Superpotentials}\label{section4.1}

In a neighborhood of a large radius limit point, the D-brane geometry of the A-model is encoded in the Fukaya category of $X$.
The relevant objects in the present context are BPS branes given by compact, special Lagrangian submanifolds $i:L\hookrightarrow X$ with respect to the Kähler class and holomorphic 3-form of $X$.
Morphisms between these objects are related to Lagrangian intersection Floer cohomology and the $A_\infty$-algebra associated to Lagrangian submanifolds \cite{fukaya_lagrangian_2009}.
The Floer cochain complex of $L$ can be thought of as a modification of the de Rham complex $\Omega^\bullet(L)$ in which the differential 
\begin{equation}
	\mathfrak{m}_1=d+\mathcal{O}\left(\exp(2\pi i\int_\beta \omega)\right), 
\end{equation}
is deformed by holomorphic disks of degree $\beta\in H_2(X,L;\Zb)$ with boundary on $L$.
This differential fits into a hierarchy of $A_\infty$ structure maps $\mathfrak{m}_k:\Omega^\bullet(L){}^{\otimes k}\rightarrow\Omega^\bullet(L)[2-k]$ related to the compactified moduli space $\widehat{\mathcal{M}}_{k+1}(\beta)$ of open stable maps $(D,\del D)\rightarrow (X,L)$ with $k+1$ marked points on the boundary.
The condition $\mathfrak{m}_1^2=0$ for $\mathfrak{m}_1$ to define a differential is generally obstructed when there is a non-zero \enquote{tadpole} anomaly $\mathfrak{m}_0\neq 0$ coming from the disk amplitude with one boundary marked point.
In case of Calabi-Yau threefolds the correction for this anomaly amounts to deforming the $A_\infty$ structure by a \textit{bounding cochain} $b\in \Omega^1(L)$ that satisfies the $A_\infty$-Maurer-Cartan equation
\begin{equation}
	\mathfrak{m}_0+\mathfrak{m}_1(b)+\mathfrak{m}_2(b,b)+\mathfrak{m}_3(b,b,b)+\cdots=0.
\end{equation}
The space of bounding cochains is then given by the critical locus of the function \cite{fukaya_counting_2011}
\begin{equation}\label{FOOOsuperpotential}
	\Psi(b)=\sum_{k}\frac{1}{k+1}\left\langle \mathfrak{m}_k(b,\ldots,b),b\right\rangle +\mathfrak{m}_{-1},\qquad \mathfrak{m}_{-1}=\sum_{\beta\in H_2(X,L;\Zb)}\mathfrak{m}_{-1,\beta}\,q^\beta,
\end{equation}
where the constant term $\mathfrak{m}_{-1}$ corresponds to a count of disks without boundary constraints.
This count is well defined for Calabi-Yau threefolds as the associated moduli space has dimension $\dim\mathcal{M}_{0}(\beta)=0$ and its inclusion makes $\Psi$ a numerical invariant on each critical point.
In \cite{solomon_differential_2020,solomon_point-like_2021,solomon_relative_2021}, Solomon-Tukachinsky further generalized this numerical invariant using the bulk-deformed $A_\infty$-algebra associated to Lagrangian submanifolds \cite{fukaya_counting_2011,fukaya_lagrangian_2009}, which allows to make contact with the closed string sector and define open Gromov-Witten invariants with both boundary and bulk constraints in great generality. 
Here, bulk constraints are specified by intersections of the interior of the disks with cycles in $X$, while boundary constraints are always point-like and correspond to the top-degree part of the bounding cochain.

\begin{remark}\label{remarkbubble}
When a holomorphic disk without boundary constraints has spherical degree $\beta=i_*\widetilde{\beta}\in \text{im}\left(H_2(X)\rightarrow H_2(X,L)\right)$, it can degenerate in a way in which the boundary collapses to a point in $L$.
These contributions spoil invariance of $\Psi$ but can be corrected when $L$ is homologically trivial by further including sphere invariants with one puncture lying on the 4-chain $\Gamma$ with $\del \Gamma=L$  \cite{pandharipande_disk_2008}.
A formalization of this procedure results in \textit{enhanced} open Gromov-Witten invariants \cite[Proposition 4.19]{solomon_relative_2021} which in general depend on the choice of $\Gamma$. In addition to the bounding cochain, it becomes part of the data making the disk count well defined.
\end{remark}

\begin{assumptions}
\label{assumpt}
Drawing from the known examples of Lagrangian submanifolds of compact Calabi-Yau threefolds, we assume that $L=\{x\in X\,|\, \iota(x)=x\}$ is the fixed point set of an anti-holomorphic involution $\iota:X\rightarrow X$, i.e. $\iota^*\omega=-\omega$ and $\iota^*\Omega=-\Omega$.
As a consequence, both $\omega$ and $\text{Re}(\Omega)$ restrict to zero on $L$, which therefore becomes a special Lagrangian submanifold of $X$.  
We further assume that $L$ is a rational homology sphere, i.e.
\begin{equation}
	H_k(L;\Qb)=H_k(S^3,\Qb)\quad \text{for all } k.
\end{equation}
We will allow $r$-torsion in $H_1(L;\Zb)\cong H^2(L;\Zb)$, such that  $H^1(L;\Zb)=0$ follows from the universal coefficient theorem.
In particular, every class $\beta\in H_2(X,L;\Zb)$ has vanishing Maslov index $\mu(\beta)=0$ and as $H^1(L;\Rb)=0$, it follows that $L$ is classically rigid as a special Lagrangian \cite{mclean_deformations_1998}.
If $L$ is homologically trivial, there exists a 4-chain $[\Gamma]\in H_4(X,L)$ with $\del[\Gamma]=[L]$.
Whenever $L$ is not homologically trivial, we assume there is a homologically equivalent Lagrangian sphere $L_0$ in order to correct for the failure mode addressed in Remark \hyperref[remarkbubble]{4.1}.
In this case, there exists a relative 4-chain $[\Gamma]\in H_4(X,L\cup L_{0})$ with boundary $\del[\Gamma]= [L] -[L_{0}]$.
Lastly, we will always assume that $L$ remains Lagrangian under small deformations of the K\"ahler class in $\Delta^*$.
\end{assumptions}

Under Assumptions \hyperref[assumpt]{4.2}, the bounding cochain $b=0$ is a critical point leading to disk invariants without boundary constraints, which essentially corresponds to the value of $\mathfrak{m}_{-1}$.
This is in line with the Solomon-Tukachinsky axioms implying that all disk invariants with boundary constraints vanish for $\beta\neq 0$ in case of  Lagrangians with $\mu(\beta)=0$ in Calabi-Yau threefolds.
We therefore consider a version of the axioms for enhanced open Gromov-Witten invariants adapted to the particular situation of a classically rigid special Lagrangian in a Calabi-Yau manifold in which only bulk constraints are present.
Given $\alpha_i\in H^*(X)$ with $i=1,...,n$, the invariants $OGW_\beta(\alpha_1,...,\alpha_n)$ of the pair $(X,L)$ satisfy
\begin{itemize}
	\item \textit{(Degree)} $OGW_\beta(\alpha_1,...,\alpha_n)=0$ unless 
		\begin{equation}
			\dim(X)-3+2n=\sum_{i=1}^n\text{deg}(\alpha_i).
		\end{equation}
	\item \textit{(Unit)} 
		\begin{equation}
			OGW_\beta(1,...,\alpha_{n-1})=
			\begin{cases}
				\int_\Gamma\alpha_1 & [L]=0,\, \beta=0 \text{ and } n=2\\
				0 & \text{otherwise.}
			\end{cases}
		\end{equation}
	\item \textit{(Zero)} 
		\begin{equation}
			OGW_{0}(\alpha_1,...,\alpha_{n})=
			\begin{cases}
				\int_\Gamma\alpha_1\cup \alpha_2 & [L]=0 \text{ and }n=2\\
				0 & \text{otherwise.}
			\end{cases}
		\end{equation}
	\item \textit{(Divisor)} For $\text{deg}(\alpha_n)=2$ it is
		\begin{equation}
			OGW_\beta(\alpha_1,...,\alpha_n)=\left(\int_\beta \alpha_n\right)\cdot OGW_\beta(\alpha_1,...,\alpha_{n-1}).
		\end{equation}
	\item \textit{(Invariance)} The numbers $OGW_\beta$ are constant under deformations of the K\"ahler class $\omega$ for which $L$ remains Lagrangian.
\end{itemize}

We consider the generating function of open Gromov-Witten invariants
\begin{equation}\label{opengeneratingfunction}
	\Psi=\frac{1}{(2\pi i)^2}\sum_{\beta\in H_2(X,L;\Zb)}OGW_\beta\,q^\beta=\frac{1}{2}\int_\Gamma \omega\cup \omega +\Psi_h=\frac{\lambda}{2}t^2+\Psi_h,
\end{equation}
where $\lambda=\int_\Gamma H\cup H$ and, in analogy to \eqref{GWpot}, define the quantum part 
\begin{equation}\label{OGWpot}
	\Psi_h=\frac{1}{(2\pi i)^2}\sum_{\beta\in H_2(X,L;\Zb)\setminus\{0\}}OGW_\beta\,q^\beta,
\end{equation}
which we think of as a holomorphic function on $\Delta^*$.
The function $\Psi$ can be interpreted as the BPS domain wall tension, with quantum corrections by disk instantons, between two open string vacua determined by $L$ and $L_0$.
Note that by the exact sequence
\begin{equation}\label{canonicallift}
	\cdots \longrightarrow H^\text{1}(L)\longrightarrow  H^\text{2}(X,L)\longrightarrow H^\text{2}(X)\longrightarrow H^\text{2}(L)\longrightarrow \cdots 
\end{equation}
and under Assumptions \hyperref[assumpt]{4.2}, there always exists a canonical lift of the Kähler class to relative cohomology such that the first term of \eqref{opengeneratingfunction} is well defined.\\

To characterize the Lagrangian submanifold as an A-brane it is necessary to also specify a flat $U(1)$-bundle $\mathcal{E}$ on $L$, leading to a further classical contribution to the superpotential of the pair $\mathcal{L}=(L,\mathcal{E})$ that arises from different choices of flat bundle $\mathcal{E}$ and $\mathcal{E}_{0}$ on the same Lagrangian $L$.
In addition, the $A_\infty$ structure maps $\mathfrak{m}_k$ have to be weighted by the holonomy $\text{hol}_{\del\beta}(\mathcal{E})$ of the corresponding flat connection \cite{fukaya_floer_2001} which, under Assumptions \hyperref[assumpt]{4.2}, changes the holomorphic part of the superpotential by a $r$th root of unity as a constant prefactor.
The flat bundles are topologically classified by their first Chern class in the torsion group $H^2(L;\Zb)$ and the analog of homological equivalence is the fact that they cannot be distinguished after their embedding into the simply connected ambient space $X$, i.e. 
\begin{equation}
	i_!:H^2(L;\Zb)\longrightarrow H^5(X;\Zb),\qquad  i_!\big(c_1(\mathcal{E})-c_1(\mathcal{E}_0)\big)= 0.
\end{equation}
Writing $s[\gamma]=[c_1(\mathcal{E})]-[c_1(\mathcal{E}_0)]\in H^2(L;\Zb)$, the additional contribution to the domain wall tension comes from a disk $D$ with boundary Poincar\'e dual to $s[\gamma]$ and relative homology class $s[D]\in H_2(X,L;\Zb)$. 
From the long exact sequences
\begin{equation}
\xymatrix{
	\cdots \ar[r] & H_2(X;\Zb) \ar[r]\ar[d]^-{PD} &  H_2(X,L;\Zb) \ar[r]\ar[d]^-{PD} &  H_1(L;\Zb) \ar[r]\ar[d]^-{PD} & 0\\
	\cdots \ar[r] & H^4(X;\Zb) \ar[r] &  H^4(X\setminus L;\Zb) \ar[r] & H^2(L;\Zb)\ar[r] & 0
	}
\end{equation}
we see that the class $r[D]\in H_2(X,L;\Zb)$ can be lifted to $[\ell]\in H_2(X;\Zb)$ such that the associated currents satisfy $[\delta_\ell]=r[\delta_{D}]\in H^4(X\setminus L;\Zb)$.
The contribution to the domain wall tension is then given by
\begin{equation}\label{2chain}
	\int_D\omega =\frac{s}{r}\int_{\ell}\omega =\frac{s}{r}t,\qquad s\in \Zb,
\end{equation}
and well defined due to the exact sequence \eqref{canonicallift}.\\

Under monodromy $t\mapsto t+1$ around the puncture, the domain wall tension is not invariant but changes up to integral periods that can be physically interpreted as a change in the integrally quantized flux of Ramond-Ramond fields which naturally couple to the A-brane in addition to the $U(1)$-gauge field \cite{walcher_opening_2007}.
This coupling leads to a further constant term $c$ that arises as three-loop contribution to the tension, which in full is up to closed string periods given by 
\begin{equation}\label{Amodelsuperpot}
	\mathcal{W}_A(q)=\frac{1}{2}\int_\Gamma \omega\cup \omega + \int_D\omega + c + \Psi_h.
\end{equation}
\vspace{-0.5cm}
\begin{figure}[H]
	\centering
  \includegraphics[width=0.75\textwidth]{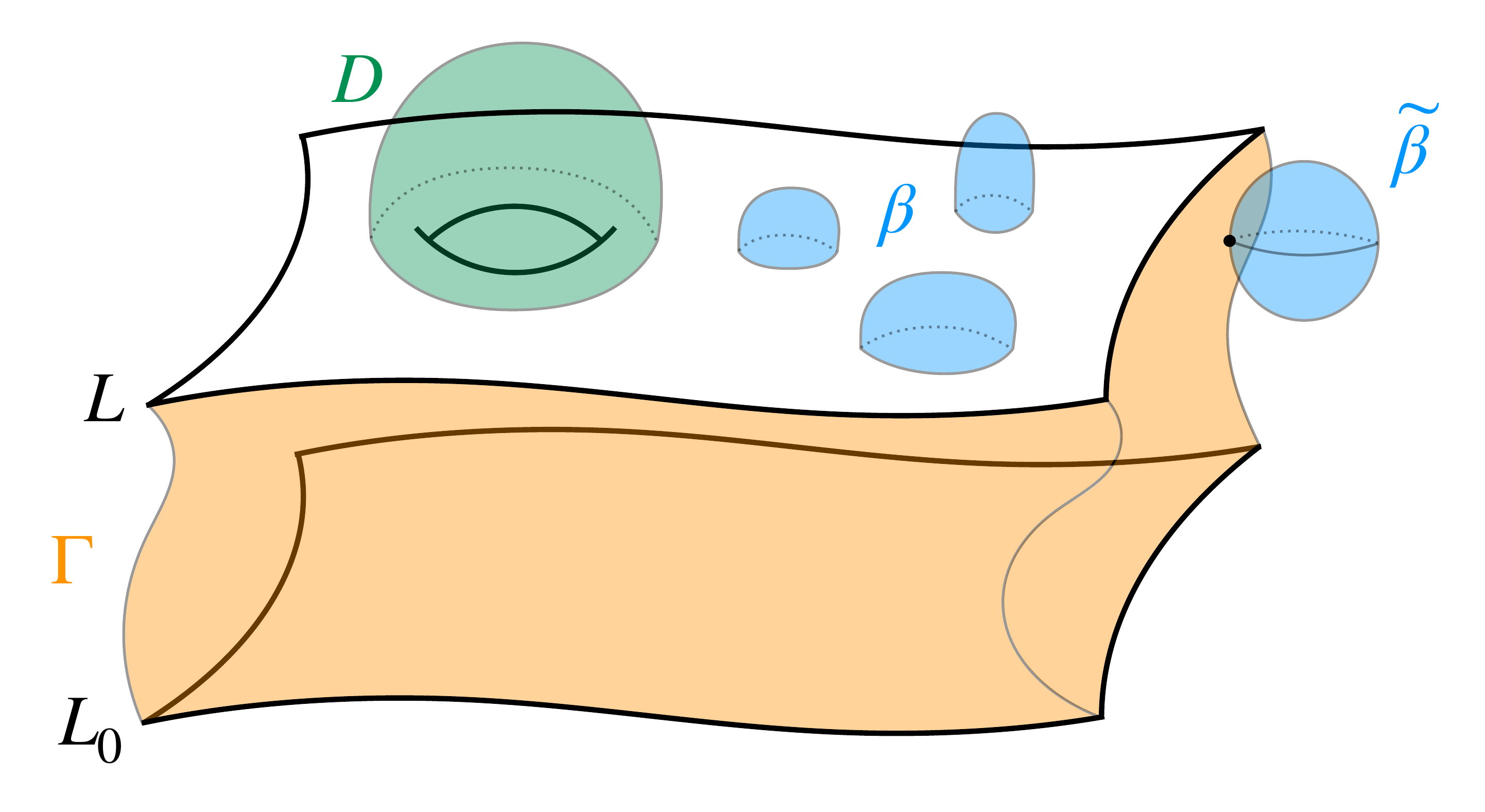}
\end{figure}
When there is a 2-chain contribution coming from a discrete open string modulus \eqref{2chain}, large volume monodromies are in general not integral.
This anomaly can be compensated by introducing several A-branes whose superpotentials are exchanged when $t\mapsto t+1$, such that the full domain wall spectrum is invariant under integral monodromy.
This is the A-model analog of an algebraic cycle branching when encircling the puncture in the B-model \eqref{Bweightfiltration} and forces the superpotential to take the general form \eqref{torsionsuperpotential}.

\subsection{Extended Mirror Pairs}

Starting from a mirror pair with isomorphic variations of MHS $\mathcal{H}^{\text{even}}(X)\cong \mathcal{H}^3(Y)$, the extension in the B-model can be given an A-model interpretation by composing the B-brane superpotential $\mathcal{W}_B$ with the mirror map.
As described in the previous section, the resulting function can be understood as A-brane superpotential $\mathcal{W}_A$ of an associated A-model geometry and defines an extension
\begin{equation}
	0\longrightarrow \mathcal{H}^{\text{even}}\longrightarrow \hat{\mathcal{H}}^{\text{even}}\longrightarrow \mathcal{I}\longrightarrow  0,\qquad \hat{\mathcal{H}}^{\text{even}}\in \text{Ext}^1_{\text{VMHS}}\left(\mathcal{I},\mathcal{H}^{\text{even}}\right)
\end{equation}
of the pure VHS $\mathcal{I}$ of weight 4 by the A-model VMHS $\mathcal{H}^{\text{even}}$.
This identification motivates the following definition in analogy to the absolute case of Section \hyperref[mirrorpairs]{2.3}.

\begin{definition}\label{extendedmirrorpair}
Let $(X,\omega)$ and $\mathcal{Y}$ be a mirror pair of Calabi-Yau threefold families, together with a collection of algebraic cycles $\mathcal{C}\subset \mathcal{Y}$ and A-branes $\mathcal{L}$ in  $X$.
We call $(X,\mathcal{L})$ and $(\mathcal{Y},\mathcal{C})$ an \textit{extended (Hodge-theoretic) mirror pair}, if the associated extensions of variations of MHS $\hat{\mathcal{H}}^{\text{even}}(X)$ and $\hat{\mathcal{H}}^3(\mathcal{Y})$ are isomorphic over small neighborhoods around MUM-type boundary points related by the mirror map.
\end{definition}

From the discussion in Section \hyperref[AJasymptotics]{3.2}, Definition \hyperref[extendedmirrorpair]{4.3} is equivalent to the identifications of the A- and B-brane superpotentials under the mirror map
\begin{equation}
	\mathcal{W}_A(q)=\mathcal{W}_B(z(q)),
\end{equation}
as the truncated normal functions $\mathcal{W}_{A/B}=Q(\widetilde{\nu},e_3)$ with respect to the canonical lift \eqref{canonical} fully determine the extension in a situation of maximal degeneracy.
Recall that we canonically normalize $e_3$ in the B-model, such that $\varpi_0(z)=1$.
Given an extended mirror pair with $\mathcal{I}$ of rank 1, the superpotential $\mathcal{W}_B$ can therefore be understood as computing the quantum corrected volume of the chain $\Gamma$ interpolating between the A-brane vacua (cf. \eqref{correctedvolume})
\begin{equation}
	\mathcal{W}=\frac{\lambda}{2}t^2+\frac{s}{r}t+c+\xi\sum_{d=1}^\infty \widetilde{n}_d\,q^{d/r}=Q(h_k,e_3),\qquad \xi^r=1,
\end{equation}
with asymptotic behavior specified by the limiting period matrix
\begin{equation}
	\hat{\Pi}_{\,q=0}=
\begin{pmatrix}[1.3]
	1 & 0 & 0 & 0 & 0 \\ 
	0 & 1 & 0 & 0 & 0 \\
	\frac{a}{24} & - \frac{\kappa}{2} & 1 & 0 & \frac{s}{r}  \\
	\frac{b\,\zeta(3)}{(2\pi i)^3} & \frac{a}{24} & 0 & 1 & c \\
	0 & 0 & 0 & 0 & 0  \\
	0 & 0 & 0 & 0 & 0 
\end{pmatrix}.
\end{equation}
In particular, the perturbative three-loop correction to the A-brane superpotential is related to the Abel-Jacobi limit in the B-model.
The identification of flat connections implies
\begin{equation}\label{twopointfunction}
	\mathfrak{D}(q(z))=\lambda+\xi\sum_{d=1}^\infty \frac{d^2\widetilde{n}_d}{r^2}\,q^{d/r}=\Psi^{\prime\prime}(q),
\end{equation}
providing an A-model interpretation of Griffiths' infinitesimal invariant \eqref{infinitesimalinvariant} as the disk two-point function of its mirror Lagrangian.
By the Solomon-Tukachinsky axioms, this corresponds to the generating function of open Gromov-Witten invariants with two bulk insertions for $\widetilde{n}_d=OGW_\beta$ and $\beta=d[\ell]$.
When $L$ arises as the fixed point locus of an anti-holomorphic involution, it is conjectured that the mirror algebraic cycle is generally defined over $\Qb$, such that the open Gromov-Witten invariants $\widetilde{n}_d\in \Qb$ are rational numbers.
However, as discussed in Section \hyperref[AJasymptotics]{3.2}, there are examples in which the B-model predicts the invariants to lie in an algebraic number field $\widetilde{n}_d\in K/\Qb$.
The analog of the multicover expansion \eqref{multicoverformula} corresponds to the characterization of $\Psi_h$ as a 2-function in the sense of \cite{schwarz_framing_2015}, see also \cite{muller_rational_2021}.
An enumerative interpretation of these invariants remains to be found.

\subsection{An A-model Origin of the Extension Class}

We consider a collection of Lagrangian submanifolds $i:L=\bigcup_kL_k\hookrightarrow X$ with $k=0,...,d$  that satisfy Assumptions \hyperref[assumpt]{4.2} and are homologically equivalent as cycles in $X$, meaning that $[L_k]-[L_{k^\prime}]=0\in H_3(X)$ for each pair of indices $k$ and $k^\prime$.
The long exact homology sequence of the pair $(X,L)$
\begin{equation}\label{longexactsequence}
	\cdots \longrightarrow H_{2k}(L)\longrightarrow H_{2k}(X)\longrightarrow H_{2k}(X,L)\longrightarrow H_{2k-1}(L)\longrightarrow \cdots 
\end{equation}
produces an extension of the even cohomology in pure weight 4 (cf. \cite{doran_algebraic_2014})
\begin{equation}\label{Amodelsequence}
\xymatrix{
	0 \ar[r] & H_4(X) \ar[r]\ar[d]^-{PD} & H_4(X,L) \ar[r]\ar[d]^-{PD} & \text{ker}\big(\,H_3(L)\overset{i_*}{\longrightarrow} H_3(X)\,\big) \ar[r]\ar[d]^-{PD} & 0\\
	0 \ar[r] & H^{2}(X) \ar[r] & H^{2}(X\setminus L) \ar[r] & \text{ker}\big(\,H^0(L)\overset{i_!}\longrightarrow H^3(X)\,\big) \ar[r] & 0
	}
\end{equation}
where the kernel of $i_!:H^0(L)\rightarrow H^3(X)$ is of rank $d$ with generators Poincar\'e dual to $[L_k]-[L_{0}]\in H_3(X)$.
Their lifts to $H^{2}(X\setminus L)$ are the currents $[\delta_{\Gamma_k}]\in H^2(X\setminus L)\cong H_4(X,L)$ defined via 
\begin{equation}\label{currents}
	\int_X \delta_{\Gamma_k}\cup \eta =\int_{\Gamma_k} \eta ,\qquad \eta\in H^4(X,L), 
\end{equation}
where $[\Gamma_k]\in H_4(X,L)$ is a four chain with boundary $\del [\Gamma_k ]=[L_k]-[L_{0}]$.
From this, we can define an extension of $\mathcal{H}^{\text{even}}$ which on the level of graded vector bundles takes the form
\begin{equation}
\hat{H}^{\text{even}}:=H^{\text{even}}(X)\oplus \Cb(-2)^{\oplus d}\langle\{\delta_{\Gamma_1},\ldots, \delta_{\Gamma_d}\}\rangle,\quad \hat{\mathcal{H}}^{\text{even}}:=\hat{H}^{\text{even}}\otimes \mathcal{O}_{\Delta^*}.
\end{equation}
An extenstion of the A-model Hodge filtration to $\hat{\mathcal{H}}^{\text{even}}$ is given by
\begin{equation}
\hat{F}^p=\bigoplus_{i\leq 3-p}\hat{H}^{2i}\subset \hat{H}^{\text{even}},\qquad \hat{\mathcal{F}}^p:=\hat{F}^p\otimes \mathcal{O}_{\Delta^*},
\end{equation}
which is seen to satisfy the conditions 
\begin{equation}
	\faktor{\hat{F}^2}{F^2}=\Cb(-2)^{\oplus d},\qquad \faktor{\hat{F}^p}{\hat{F}^{p+1}}= \faktor{F^p}{F^{p+1}}.
\end{equation}
We think of the currents \eqref{currents} as $\hat{F}^2$-lifts $f_k=[\delta_{\Gamma_k}]\in \hat{F}^2$ compatible with the Hodge filtration on $\hat{H}^{\text{even}}$.\\

The $H^2$-module structure on $H^{\text{even}}$ induced by the cup product naturally extends to $\hat{H}^\text{even}$ via
\begin{equation}\label{extendedcupproduct}
	e_2\cup f_k=[H]\cup [\delta_{\Gamma_k}]=\left(\int_X \delta_{\Gamma_k} \cup H\cup H\right)[\ell]=\left(\int_{\Gamma_k}H\cup H\right)[\ell]
\end{equation}
and we seek a deformation of this module defined in terms of the open Gromov-Witten potential which matches the extension in the B-model.
The flat connection on $\hat{\mathcal{H}}^{\text{even}}$ is therefore given by a family of algebraic structures $[H]\circledast(-): \hat{H}^{\text{even}}\rightarrow \hat{H}^{\text{even}}$ that is an augmentation of the small quantum product by holomorphic disks.
On $H^{\text{even}}$ it reduces to $e_2*(-)$, while its action on the extending generators is defined by the rule  
\begin{equation}\label{extproduct}
	\nabla_t(f_k)=e_2\circledast f_k := \sum_{l,m}\sum_{\beta\in H_2(X,L;\Zb)}OGW^k_\beta(e_2,e_l)\,q^\beta\,Q^{lm}\,e_m
	= \Psi_k^{\prime\prime}\,[\ell],
\end{equation}
where we denote by $\Psi_k$ and $OGW_\beta^k$ the superpotential and open Gromov-Witten invariants of $L_k$.
Similar to the absolute case, the energy zero contribution to this extended quantum product corresponds to the cup product $e_2\circledast f_k|_{q=0}=e_2\cup f_k$.
The extension of the Dubrovin connection is determined by both the open and closed Gromov-Witten potentials and satisfies Griffiths transversality due to $\nabla_t(f_k)\in \hat{\mathcal{F}}^1$. 
Its flatness in the one-parameter case is an immediate consequence of the algebraic properties of the operation $[H]\circledast (-)$.
In the basis $\{e_j,f_k\}$ it is represented by the matrix
\begin{equation}\label{extendeddubrovin}
\nabla_t =d+\Big([H]\circledast\Big)\otimes dt\\
:= d + 
\begin{pmatrix}
	0 & 0 & 0 & 0 & 0 & \cdots & 0\\
	1 & 0 & 0 & 0 & 0 & \cdots & 0\\
	0 & -\Phi^{\prime\prime\prime} & 0 & 0 & -\Psi^{\prime\prime}_1 & \cdots & -\Psi^{\prime\prime}_d  \\
	0 & 0 & -1 & 0 & 0 & \cdots & 0\\
	0 & 0 & 0 & 0 & 0 & \cdots & 0\\
	\vdots & \vdots & \vdots & \vdots & \vdots & \ddots & \vdots \\
 0 & 0 & 0 & 0 & 0 & \cdots & 0
\end{pmatrix}
\otimes dt,
\end{equation}
with $\nabla_t f_k=-\Psi^{\prime\prime}_k\,e_1$, c.f. \eqref{derivativeextgenerator}.
In analogy to the absolute case we construct a $\nabla$-flat $\Cb$-local system $\hat{\mathcal{H}}^{\text{even}}_\Cb$ by first setting
\begin{equation}
	\widetilde{\sigma}(f_k):=f_k+\Psi_{k,h}^\prime\, e_1+\Psi_{k,h}\, e_0
\end{equation}
in terms of the holomorphic part $\Psi_{k,h}$ of the generating function of open Gromov-Witten invariants on $L_k$, and then defining the quantum deformed current
\begin{equation}
	h_k=\sigma(f_k):=\widetilde{\sigma}(e^{-\omega}\cup f_k)=e^{-\omega}\delta_{\Gamma_k}+\Psi_{k,h}^\prime\, e_1+\Psi_{k,h}\, e_0.
\end{equation}
The local system is extended to $\hat{\mathcal{H}}^{\text{even}}_\Cb$ by $\nabla(h_k)=0$ and we regard $\hat{\mathcal{H}}^{\text{even}}_\Cb$ as a solution to the quantum differential equation coming from $[H]\circledast(-)$.\\

In the multiparameter setup of Remark \hyperref[WDVV]{2.2}, vanishing of the A-model curvature poses a further algebraic constraint on the extended quantum product coming from
\begin{equation}
R_\nabla(\del_i,\del_j)\big([\delta_{\Gamma_k}]\big)=[H_i]\circledast\big([H_j]\circledast[\delta_{\Gamma_k}]\big)-[H_j]\circledast\big([H_i]\circledast[\delta_{\Gamma_k}]\big).
\end{equation}
It can be interpreted as an equality of open Gromov-Witten invariants coming from certain boundary strata of the disk moduli space that correspond to disks with boundary on $L_k$, three marked points in the interior and no marked points on the boundary.
In terms of the Gromov-Witten potential and superpotential associated to $L_k$, this can be expressed by the following condition.

\begin{proposition}
	Flatness of the extended Dubrovin connection \eqref{extendeddubrovin} is equivalent to the concurrent validity of both the ordinary WDVV equations \eqref{WDVVequations}, and the system of partial differential equations 
\begin{equation}
	\sum_{a,b}\del_a\del_i\del_j\,\Phi\cdot Q^{ab}\cdot \del_b\del_l\,\Psi^k=\sum_{a,b}\del_a\del_i\,\Psi^k \cdot Q^{ab}\cdot \del_b\del_j\del_l\,\Phi ,\quad \text{for all } i,j,k,l,
\end{equation}
\vspace{-0.5cm}
\begin{figure}[H]
	\centering
  \includegraphics[width=0.75\textwidth]{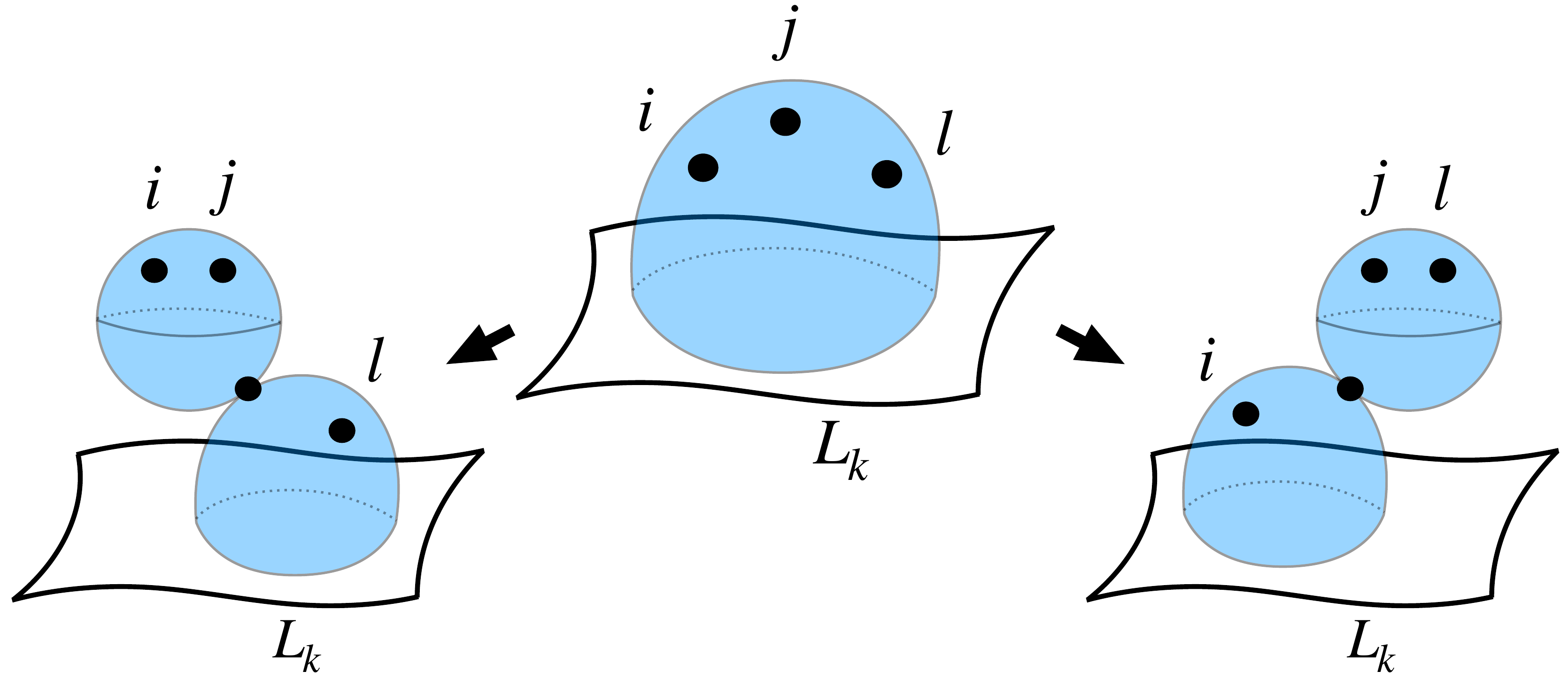}
\end{figure}
which we identify as the Open WDVV equations \cite{solomon_relative_2021,alcolado_extended_2017} under the Assumptions \hyperref[assumpt]{4.2}.
\end{proposition}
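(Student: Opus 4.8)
The strategy is to compute the curvature of the connection \eqref{extendeddubrovin} in the multiparameter setting and to exploit the block structure of the operators $[H_i]\circledast(-)$. Writing $\nabla = d+\sum_i A_i\,dt_i$ with $A_i=[H_i]\circledast(-)$, the rule \eqref{extproduct} shows that each $A_i$ maps $H^{\text{even}}$ to itself, acting there as the closed product $e_i*(-)$, and maps every extending generator $f_k$ into $H^{\text{even}}$, never producing an $f$-component. Hence each $A_i$, and therefore the curvature $R_\nabla(\del_i,\del_j)=\del_iA_j-\del_jA_i+[A_i,A_j]$, is strictly block upper-triangular: its $f\to f$ block vanishes identically, and it splits into an action on $H^{\text{even}}$ and an action on the $f_k$. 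Since these two blocks are independent, flatness is equivalent to the simultaneous vanishing of both, which already accounts for the \emph{concurrent validity} asserted in the statement.

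First I would dispose of the $H^{\text{even}}$-block. As recorded in Remark \hyperref[WDVV]{2.2}, the derivative terms cancel by symmetry of the fourth derivatives of $\Phi$ and the block reduces to the closed associator $e_i*(e_j*e_k)-e_j*(e_i*e_k)$; pairing against $e_l$ through $Q$ and using the Frobenius-type identity $Q(e_a*e_b,e_c)=\del_a\del_b\del_c\Phi$ turns its vanishing into the ordinary WDVV equations \eqref{WDVVequations}. This part is standard and I would cite it rather than reprove it.

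The substance is the $f$-block. The curvature formula displayed before the statement already gives $R_\nabla(\del_i,\del_j)(f_k)=e_i*(e_j\circledast f_k)-e_j*(e_i\circledast f_k)$, the open derivative terms cancelling by symmetry of $\del_i\del_j\del_l\Psi^k$. To read off the resulting condition I would pair this against $e_l$ using the two identities $Q(e_a*e_b,e_c)=\del_a\del_b\del_c\Phi$ and $Q(e_a\circledast f_k,e_c)=\del_a\del_c\Psi^k$, the latter being immediate from \eqref{extproduct}. Substituting $e_j\circledast f_k=\sum_{l,m}(\del_j\del_l\Psi^k)Q^{lm}e_m$ and contracting the internal index against the closed three-point function produces a bilinear identity in $\del\del\del\Phi$ and $\del\del\Psi^k$; after relabelling the contracted indices and invoking the symmetry of the three-point function this becomes the system
\[
\sum_{a,b}\del_a\del_i\del_j\,\Phi\cdot Q^{ab}\cdot\del_b\del_l\,\Psi^k=\sum_{a,b}\del_a\del_i\,\Psi^k\cdot Q^{ab}\cdot\del_b\del_j\del_l\,\Phi.
\]
Recognizing this as the Open WDVV equations is then, under Assumptions \hyperref[assumpt]{4.2}, the identification of $\del\del\Psi^k$ with the disk two-point function of $L_k$, so that each side is the open Gromov-Witten count attached to one of the two boundary degenerations of a disk with three interior and no boundary marked points; this is the content I would cite from \cite{solomon_relative_2021,alcolado_extended_2017}.

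I expect the genuine obstacle to be the index-and-sign bookkeeping in the $f$-block: the polarization $Q$ is antisymmetric, while the three- and two-point functions carry the symmetric Poincar\'e structure of Gromov-Witten theory, so the contraction must be organized carefully in order to land precisely on the stated arrangement of indices rather than on a superficially different but equivalent one. A secondary point needing justification is the cancellation of the open derivative terms, which presupposes that the structure constants $e_i\circledast f_k$ are second derivatives of a single generating function $\Psi^k$; this is guaranteed by the Divisor axiom for $OGW_\beta$ together with the definition \eqref{OGWpot}, and is exactly what makes $\del_i(e_j\circledast f_k)$ symmetric in $i$ and $j$.
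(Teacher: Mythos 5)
Your proposal is correct and follows exactly the route the paper takes (the paper leaves the computation implicit, merely displaying the curvature $R_\nabla(\del_i,\del_j)\bigl([\delta_{\Gamma_k}]\bigr)$ on the extending generators and deferring the closed block to Remark \hyperref[WDVV]{2.2}): the block decomposition of $[H_i]\circledast(-)$, the cancellation of derivative terms by symmetry of the potentials, and the contraction of the commutator on $f_k$ against $Q$ to land on the stated bilinear system. Your flagged caveats --- the sign bookkeeping forced by the antisymmetry of $Q$ versus the symmetric Poincar\'e pairing, and the role of the Divisor axiom in making $e_j\circledast f_k$ the second derivative of a single potential $\Psi^k$ --- are exactly the right points to be careful about and do not constitute gaps.
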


\begin{remark}
In \cite{solomon_relative_2021}, the Open WDVV equations result in the associativity of a much more general relative quantum product $\mem$ defined on a version of relative cohomology, which differs from the construction proposed here, but contains similar information in the given situation.
It would be interesting to clarify a possible correspondence, in particular in view of including boundary punctures into the present discussion.
\end{remark}

The difference of lifts $h_k-f_k$ is $d$-closed and in each fiber well defined modulo $\mathcal{F}^2+\mathcal{H}^{\text{even}}_\Zb$, where $\mathcal{H}^{\text{even}}_\Zb$ denotes Iritani's $\hat{\Gamma}$-integral local system.
The extension is then classified by a normal function $\nu_{L_k}=[h_k-f_k]\in \mathcal{J}^2(\mathcal{H}^{\text{even}})$ given by a section of the intermediate Jacobian fibration
\begin{equation}
	\mathcal{J}^2(\mathcal{H}^{\text{even}})=\frac{\mathcal{H}^\text{even}}{\mathcal{F}^2+\mathcal{H}^\text{even}_\Zb}\cong \faktor{\left(\mathcal{F}^2\right)^\vee}{\left(\mathcal{H}^\text{even}_\Zb\right)^\vee}.
\end{equation}
This normal function gives a functional defined up to integral periods that acts on each fiber by 
\begin{equation}
	\nu_{L_k}(\eta)_t:=\int_X\left(h_k-f_k\right)\cup \eta=\int_X h_k \cup \eta,\qquad \eta\in F^2H^{\text{even}}(X),
\end{equation}
where the second term vanishes for dimensionality reasons.
The truncated normal functions are given by
\begin{equation}
\mathcal{V}_k = Q(\nu_{L_k},e_2)=\int_{\Gamma_k}\omega\cup H+\Psi^\prime_{h,k},\quad 
	\mathcal{W}_k = Q(\nu_{L_k},e_3)=\frac{1}{2}\int_{\Gamma_k}\omega\cup \omega+\Psi_{h,k} 
\end{equation}
and reproduce the expected structure of the A-brane superpotential with $\mathcal{V}_k=\mathcal{W}_k^\prime$. 
As discussed around \eqref{twopointfunction}, the infinitesimal invariants correspond to the two-point functions on the disk 
\begin{equation}
	\Psi_k^{\prime\prime}=\sum_{\beta\in H_2(X,L;\Zb)}\text{OGW}^k_{\beta}(H,H)\,q^\beta=\int_X H \circledast H \circledast \delta_{\Gamma_k}=Q(\nabla_t^2\widetilde{\nu}_{L_k},e_3)=\mathfrak{D}_k.
\end{equation} 

\begin{remark}
This approach has two deficiencies: 
The additional classical contributions to the A-brane superpotential \eqref{Amodelsuperpot} are related to the choice of a specific integral local system $\hat{\mathcal{H}}^{\text{even}}_\Zb$ underlying the extension.
Analogous to the absolute case, they are expected to arise in this context as mirrors of the Abel-Jacobi limit and determine the LMHS of the extension as a constant of integration of the quantum differential equation coming from $[H]\circledast (-)$.
Their quantum interpretation in pure A-model terms presumably requires an extension of Iritani's $\hat{\Gamma}$-construction which takes the bundle data into account and correctly specifies the asymptotics of A-model periods including the constant term $Q(\widetilde{h}(0),e_3)=c$ of \eqref{specialvalue} associated to the number field that underlies the mirror object.
We conjecture that a systematic construction of an extended Gamma class and the corresponding quantum Abel-Jacobi map
\begin{equation}
	AJ:\mathcal{L}\longmapsto \nu_{\mathcal{L}}=[h-f]\in \mathcal{J}^2(\mathcal{H}^{\text{even}})
\end{equation} 
will generally reproduce the limiting periods of the mirror B-model.
Secondly, as already noted in \cite{doran_algebraic_2014}, the short exact sequence \eqref{Amodelsequence} cannot explain an extension class between two flat bundles on the same Lagrangian.
We view this as a symptom of the cohomology not fully representing the A-brane geometry and resort to the fact that the corresponding truncated normal function can often be derived after including an auxiliary reference (see Example \hyperref[realquinticdeligne]{4.8}).
\end{remark}

To make the monodromy $\hat{M}:\hat{\mathcal{H}}^{\text{even}}_\Zb\rightarrow \hat{\mathcal{H}}^{\text{even}}_\Zb$ around $q=0$ unipotent, we pass to a $r$-fold cover $\Delta_{\hat{q}}^*$ with local coordinate $\hat{q}=q^{1/r}$ and extended monodromy logarithm $\hat{N}= \log(\hat{M}^r)$.
At the puncture, $\hat{N}$ is given by $-2\pi i\,\text{Res}_{\hat{q}=0}(\nabla)$ and we consider
\begin{equation}
	\nabla_{\hat{q}}\left([\delta_{\Gamma_k}]\right)=\frac{r}{2\pi i q}\sum_{\beta\in H_2(X,L;\Zb)}\text{OGW}_{\beta}(H,H)\,q^\beta\, [\ell].
\end{equation}
As whenever $\beta\neq 0$ all holomorphic disks have positive area $\int_\beta\omega>0$, we have $q^\beta\rightarrow 0$ as $q\rightarrow 0$.
The residue is then given by
\begin{equation}
	\text{Res}_{\hat{q}=0}\nabla_{\hat{q}}\left([\delta_{\Gamma_k}]\right)=\frac{r}{2\pi i}\,\text{OGW}_{0}(H,H)\,[\ell]=\frac{r}{2\pi i}\left(\int_{\Gamma_k} H\cup H\right)[\ell],
\end{equation}
which is $r/(2\pi i)$ times the only non-vanishing entry of the matrix representing the cup product $[H]\cup [\delta_{\Gamma_k}]$.
Along similar lines \cite{cox_mirror_1999} we can compute the remaining entries of the residue matrix such that the monodromy logarithm $\hat{N}= \log(\hat{M}^r)$ of the extended A-model VMHS at $0\in \Delta_{\hat{q}}$ is given by $-r[H]\cup (-):\hat{H}^{\text{even}}\rightarrow \hat{H}^{\text{even}}$, which in the basis $\{e_i,f_k\}$ is represented by the matrix
\begin{equation}
 \hat{N}_{\hat{q}=0}
 =
 	\begin{pmatrix}
	0 & 0 & 0 & 0 & 0 & \cdots & 0\\
	-r & 0 & 0 & 0 & 0 & \cdots & 0\\
	0 & r\kappa & 0 & 0 & \lambda_1 & \cdots & \lambda_d \\
	0 & 0 & r & 0 & 0 & \cdots & 0\\
	0 & 0 & 0 & 0 & 0 & \cdots & 0\\
	\vdots & \vdots & \vdots & \vdots & \vdots & \ddots & \vdots \\
 0 & 0 & 0 & 0 & 0 & \cdots & 0
\end{pmatrix}.
\end{equation}
Here, $r$ corresponds to the order of the torsion group $H^2(L;\Zb)$.
The associated relative weight filtration $\hat{W}_\bullet=\hat{W}_\bullet(\hat{N})$ leads to a filtration $\hat{\mathcal{W}}_\bullet=\hat{W}\otimes\mathcal{O}_{{\Delta}^*}$ of $\hat{\mathcal{H}}^{\text{even}}$ such that the graded pieces fit into the diagram with horizontal exact sequences
\begin{equation}\label{Aweightfiltration}
	\xymatrix{
	& \mathcal{H}^0 \ar[d]_-{rN} \ar[r]^-{\sim} & \hat{\mathcal{H}}^0 \ar[d]^-{\hat{N}} & & \\
	0 \ar[r] & \mathcal{H}^2 \ar[d]_-{rN} \ar[r] & \hat{\mathcal{H}}^2 \ar[d]^-{\hat{N}} \ar[r] & \mathcal{I}_\Zb \ar[r]\ar[d]^-{L} & 0\\
	0 \ar[r] & \mathcal{H}^4 \ar[d]_-{rN} \ar[r] & \hat{\mathcal{H}}^4 \ar[d]^-{\hat{N}} \ar[r] &    H   \ar[r] & 0\\
	& \mathcal{H}^6 \ar[r]^-{\sim} & \hat{\mathcal{H}}^6, & &
	}
\end{equation}
cf. \eqref{longexactsequence}. There is an immediate resemblance with \eqref{Bweightfiltration}, where we in particular identify the torsion group $H$ in \eqref{Bweightfiltration} with $H^2(L;\Zb)$ and the degree of the branched covering $\hat{\mathcal{Y}}\rightarrow \Delta^*_{\hat{z}}$ with its order.

\begin{remark}
The nilpotent orbit associated to the extension $\hat{\mathcal{H}}^\text{even}$ is fully determined by the $H^2$-module structure on $\hat{H}^\text{even}$ defined by the cup product \eqref{extendedcupproduct}.
Its deformation to an extended quantum product subject to the Open WDVV equations can be viewed as an extended version of a Frobenius module in the sense of \cite{cattani_frobenius_2003}.
\end{remark}

We now consider the main example of \cite{walcher_opening_2007}, concerning a mirror pair of cycles lying in the Fermat quintic and mirror quintic family discussed in Example \hyperref[closedexample]{2.3}.

\begin{example}\label{realquinticdeligne}
On the A-side, an anti-holomorphic involution $\iota:\Pb^4\rightarrow \Pb^4$ with $\iota(x_i)=\overline{x}_i$ restricted to the Fermat quintic \eqref{fermatquintic} has a fixed point locus
\begin{equation}\label{reallocus}
	L=\left\{y_1^5+y_2^5+y_3^5+y_4^5+y_5^5=0\right\},\qquad y_i\in \Rb,
\end{equation}
which is special Lagrangian with respect to the Kähler class $\omega$ and holomorphic 3-form $\Omega$ on $X$.
Solving \eqref{reallocus} for any $y_i$ over $\Rb$ topologically identifies this real quintic with a copy of real projective space $L\cong \Rb\Pb^3$.
Flat U(1)-bundles $\mathcal{E}$ on $L$ are up to isomorphism classified by their first Chern class $c_1(\mathcal{E})\in H^2(L;\Zb)\cong \Zb/2\Zb$ such that there are two A-brane configurations which we denote by $\mathcal{L}_+=(L,\mathcal{E}_+)$ and $\mathcal{L}_-=(L,\mathcal{E}_-)$.
Based on constraints imposed by consistency of A-brane charges \cite{walcher_opening_2007}, the most general superpotentials associated to $\mathcal{L}_{\pm}$ are up to closed string periods, i.e. $\text{mod}\enskip t\Zb+\Zb$, given by 
\begin{equation}\label{twovacua}
\mathcal{W}_+=\frac{t^2}{4} +\Psi_h,\qquad \mathcal{W}_-=\frac{t^2}{4}-\frac{t}{2}+\frac{1}{4} - \Psi_h, \qquad\Psi_h=\frac{1}{(2\pi i)^2}\sum_{d=1}^\infty \widetilde{n}_d\, q^{d/2},
\end{equation}
with $\widetilde{n}_d=OGW_\beta$ for $\beta=d\,[\ell]$.
We interpret these superpotentials as the domain wall tensions of $\mathcal{L}_{\pm}$ in relation to a third Lagrangian $L_0\subset X$, where the classical currents \eqref{currents} account for the terms proportional to $t^2$.
Thought of the superpotentials as truncated normal functions $\mathcal{W}_\pm=Q(\widetilde{\nu}_\pm,e_3)$, the class $\nu_\pm=[\widetilde{\nu}_\pm]\in\mathcal{J}^2(\mathcal{H}^{\text{even}})$ defines an extension of the A-model VMHS associated to $(X,\omega)$.
Note that because the superpotentials are measured with respect to the same Lagrangian $L_0$ up to homology, the corresponding $\hat{\mathcal{F}}^2$-lifts $f_\pm$ coincide, while the quantum deformed currents $h_\pm$ differ due to different choices of flat bundle on $L$. 
Under monodromy $\hat{M}$ of the corresponding extended local system the extending generators $h_\pm$ are exchanged, such that we have to consider a pullback along the double cover $\hat{q}=q^{1/2}\mapsto q$ and the monodromy logarithm $\hat{N}=\log(\hat{M}^2)$.
In the basis $\{g_i,h_\pm\}$ the respective matrices are given by
\begin{equation}
\hat{M}=
\begin{pmatrix}
1 & 0 & 0 & 0 & 0 & 0\\
-1 & 1 & 0 & 0 & 0 & 0\\
0 & 5 & 1 & 0 & 1 & 0\\
-5 & 5 & 1 & 1 & 0 & 0\\
0 & 0 & 0 & 0 & 0 & 1\\
0 & 0 & 0 & 0 & 1 & 0
\end{pmatrix},\qquad
\hat{N}=
\begin{pmatrix}
0 & 0 & 0 & 0 & 0 & 0\\
-2 & 0 & 0 & 0 & 0 & 0\\
5 & 10 & 0 & 0 & 1 & 1\\
-\frac{25}{3} & 5 & 2 & 0 & 0 & -1\\
0 & 0 & 0 & 0 & 0 & 0\\
0 & 0 & 0 & 0 & 0 & 0
\end{pmatrix}.
\end{equation}
Taking the difference $\mathcal{T}_A=\mathcal{W}_+-\mathcal{W}_-$ yields the domain wall tension between the vacua specified by the bundles $\mathcal{E}_+$ and $\mathcal{E}_-$ on the same Lagrangian. It is $\text{mod}\enskip t\Zb+\Zb$ of the form
\begin{equation}\label{TA}
	\mathcal{T}_A = \frac{t}{2}-\frac{1}{4} + 2\Psi_h = \frac{t}{2}-\left(\frac{1}{4} + \frac{1}{2\pi^2}\sum_{d=0}^{\infty} \widetilde{n}_d \, q^{d/2}\right)
\end{equation}
and likewise defines an extension of $\mathcal{H}^{\text{even}}$ on the double cover with canonical lift of the normal function $\widetilde{\nu}=\mathcal{T}_A^\prime\, e_1+\mathcal{T}_A\,e_0$ and monodromy logarithm
\begin{equation}
\hat{N}=
\begin{pmatrix}
0 & 0 & 0 & 0 & 0\\
-2 & 0 & 0 & 0 & 0\\
5 & 10 & 0 & 0 & 0\\
-\frac{25}{3} & 5 & 2 & 0 & 1\\
 0 & 0 & 0 & 0 & 0
\end{pmatrix}.
\end{equation}
On the B-side we consider the image of the Deligne conics 
\begin{equation}
		C_{\pm,\psi}=\left\{x_1+x_2=0,\enskip x_3+x_4=0,\enskip x_5^2=\pm\sqrt{5\psi}\,x_1x_3=0\right\}\subset Y_\psi
\end{equation}
under the Greene-Plesser quotient \cite{morrison_d-branes_2009}.
As the components of $C_{\pm,\psi}$ are exchanged under monodromy around $z=0$, the algebraic curve $\mathcal{C}_\pm\subset \hat{\mathcal{Y}}$ is only globally well defined after passing to the double cover. 
Because $\mathcal{C}_+-\mathcal{C}_-\in \text{CH}^2_{\text{hom}}(\mathcal{Y})$, any family of 3-chains $\Gamma_z\in H_3(Y_z;\Zb)$ defines a normal function with superpotential
\begin{equation}
	\mathcal{T}_B=\int_{\Gamma_z}\Omega_z.
\end{equation}
\end{example}

The statement of extended mirror symmetry can now be formulated as the following Mirror Theorem for the quintic.

\begin{theorem}
	The cycles given by both choices of flat bundle on the real quintic $\mathcal{L}_\pm$ in $X$ and the Deligne conics $\mathcal{C}_\pm\subset \hat{\mathcal{Y}}$ determine an extended mirror pair in the sense of Definition \hyperref[extendedmirrorpair]{4.3}.
\end{theorem}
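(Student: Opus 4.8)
The plan is to verify Definition~\ref{extendedmirrorpair} by exhibiting an isomorphism of the two extended variations of MHS, which by the discussion immediately following that definition is equivalent to the single identity $\mathcal{W}_A(q)=\mathcal{W}_B(z(q))$ for the truncated normal functions. The underlying closed-string variations $\mathcal{H}^{\text{even}}(X)$ and $\mathcal{H}^3(\mathcal{Y})$ are already matched by the Mirror Theorem for the quintic (Example~\ref{closedexample}), so the mirror map $z(q)$ and the operator $D_{\text{PF}}$ are fixed once and for all. Both extended objects are rank-one extensions of this common VMHS, after passing to the double cover $\hat{q}=q^{1/2}$ forced by the branching of the Deligne conic and, on the A-side, by the two flat bundles on $L\cong\Rb\Pb^3$ with $H^2(L;\Zb)\cong\Zb/2\Zb$. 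By Carlson's theorem each extension is classified by a normal function in $\mathcal{J}^2$, and by the analysis of Section~\ref{AJasymptotics} this class is, in the maximally degenerate situation, completely determined by its truncated normal function $\mathcal{W}=Q(\widetilde{\nu},e_3)$. Hence it suffices to compare the A- and B-side functions $\mathcal{W}$ stage by stage.

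First I would match the perturbative data through the monodromy. The extended logarithms $\hat{N}$ on both sides are the explicit matrices recorded in Example~\ref{realquinticdeligne}; checking that they coincide under the closed-string identification $\{g_i\}\leftrightarrow\{e_j\}$ pins down $r=2$, the quadratic coefficient $\lambda/r^2=1/4$, and the linear coefficients $s_\pm/r$. This forces the $t^2$ and $t$ terms of $\mathcal{W}_A$ and $\mathcal{W}_B$ to agree and, via the limiting period matrix $\hat{\Pi}_{q=0}$, identifies the constant term $c$ with the Abel-Jacobi limit \eqref{AJlimit}. Structurally, flatness of the extended Dubrovin connection on the A-side (the Open WDVV equations of Proposition~4.4) guarantees that $\hat{\mathcal{H}}^{\text{even}}$ is a genuine VMHS, so that this comparison is meaningful.

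The crux is the holomorphic part. Both truncated normal functions satisfy the \emph{same} inhomogeneous Picard-Fuchs equation $D_{\text{PF}}\mathcal{W}=\mathcal{J}$, since the homogeneous operator is common and the inhomogeneity is the infinitesimal invariant $\mathfrak{D}=\theta^2\mathcal{W}$. On the B-side I would compute $\mathcal{J}$ directly from the geometry of the Deligne conics via a residue computation for the class of $\mathcal{C}_+-\mathcal{C}_-$, recovering the half-integral series $\mathcal{J}\sim\sqrt{z}$ of Walcher's original calculation; solving the inhomogeneous equation with integration constants fixed in the previous step then yields $\mathcal{T}_B(z(q))$ as a $q^{1/2}$-series. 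On the A-side the same $\mathcal{J}$ is the disk two-point function $\Psi''=\sum_{\beta}OGW_\beta(H,H)\,q^\beta$ of \eqref{twopointfunction}, whose coefficients are the open Gromov-Witten invariants $\widetilde{n}_d=OGW_{d[\ell]}$ governed by the Solomon-Tukachinsky axioms. Verifying $\Psi''(q)=\mathfrak{D}(q(z))$ term by term, together with the matching of constants from the second stage, gives $\mathcal{W}_A(q)=\mathcal{W}_B(z(q))$ for both $\mathcal{L}_\pm$ (equivalently for the domain-wall tension $\mathcal{T}_A$ of \eqref{twovacua}), and hence the claimed isomorphism of extensions.

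The main obstacle is precisely this last identification: one must show that the invariants produced by the Solomon-Tukachinsky axioms assemble into a generating function whose infinitesimal invariant equals the B-model inhomogeneity of the Deligne conics, i.e. an open mirror theorem. The hardest input is controlling the enumerative A-side count — in particular the correct treatment of the boundary-bubbling correction of Remark~\ref{remarkbubble} through the reference Lagrangian $L_0$ and the four-chain $\Gamma$, and the compatibility of the resulting enhanced invariants with the half-integer $q^{1/2}$-grading imposed by the double cover. A fully rigorous matching of the extension class, i.e. of the constant $c$, further requires the N\'eron-model and regulator description \eqref{neronmodel} of the Abel-Jacobi limit, since $c$ is the relevant arithmetic special value rather than a naive $\hat{q}\to 0$ limit.
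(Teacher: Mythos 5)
Your proposal is correct and follows essentially the same route as the paper: reduce the isomorphism of extensions to the identity $\mathcal{W}_A(q)=\mathcal{W}_B(z(q))$ of truncated normal functions, obtain the holomorphic part from the inhomogeneous Picard--Fuchs equation with inhomogeneity $\sim\sqrt{z}$ matched against the open Gromov--Witten potential (the localization result of Pandharipande--Solomon--Walcher, which you rightly flag as the hard enumerative input), and then fix the classical terms determining the integral structure and LMHS separately. The only minor divergence is in that last step: you pin down the constant $c=-\tfrac14$ via the N\'eron-model/regulator description of the Abel--Jacobi limit, whereas the paper obtains it from the analytic continuation of $\mathcal{T}_B$ to the Gepner point following Walcher; both are legitimate and the paper itself discusses each.
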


\begin{proof}
The statement is a consequence of the results in \cite{walcher_opening_2007,pandharipande_disk_2008,morrison_d-branes_2009}.
The B-brane superpotential $\mathcal{T}_B$ satisfies the inhomogeneous Picard-Fuchs equation \cite{morrison_d-branes_2009}
\begin{equation}
	D_{\text{PF}}\,(-)=(2\pi i)^2\left[\theta^4-5z\prod_{k=1}^4\left(5\theta+k\right)\right](-)=-\frac{15}{4}\sqrt{z},
\end{equation}
for the standard choice of holomorphic 3-form \eqref{standardform} on $\mathcal{Y}$ and its particular solution $\tau(z)$ computes the open Gromov-Witten potential of the real quintic $X$.
By the localization calculation of \cite{pandharipande_disk_2008} it is obtained by canonical normalization and composition with the mirror map
\begin{equation}
	\tau(z)=2\sum_{d\text{ odd}}\frac{(5d)!!}{(d!!)^5}\,z^{d/2},\qquad (2\pi i )^2\,\Psi_h(q)=\frac{\tau(z(q))}{\varpi_0(z(q))}.
\end{equation}
This identification establishes an equivalence of the A- and B-brane superpotentials up to solutions of the homogeneous Picard-Fuchs equation.
Employed as truncated normal functions in terms of the canonical lift \eqref{canonical}, this induces an isomorphism of the underlying complex variations of MHS.
Equality of integral local systems corresponds to an identification of the classical terms in the superpotentials which determine the LMHS.
Analytic continuation of $\mathcal{T}_B$ to the Gepner point provides the LMHS of the B-model \cite{walcher_opening_2007}
\begin{equation}
	\mathcal{T}_B(z)=\frac{\varpi_1(z)}{2}-\left(\frac{\varpi_0(z)}{4}+\frac{1}{2\pi^2}\,\tau(z)\right),\qquad \mathcal{T}_A(q)=\frac{\mathcal{T}_B(z(q))}{\varpi_0(z(q))},
\end{equation}
which reproduces \eqref{TA} on the A-side.
\end{proof}

The flat connection in the canonical coordinate is determined by the infinitesimal invariant 
\begin{equation}
	\mathfrak{D}(z(q))=2\Psi_h^{\prime\prime}=15\,q^{1/2}+6900\, q^{3/2}+13603140\,q^{5/2}+\cdots,
\end{equation}
which we identify as twice the two-point function on the disk by the Solomon-Tukachinsky axioms.
Here, the open Gromov-Witten potential of $L$ is given by 
\begin{equation}
	(2\pi i )^2\,\Psi_h=30\,q^{1/2}+\frac{4600}{3}\,q^{3/2}+\frac{5441256}{5}\,q^{5/2}+\cdots.
\end{equation}
The functional which mirrors the Abel-Jacobi map in the A-model is fully determined by its action on $\mathcal{F}^2$ with limiting A-model periods
\begin{equation}
	Q(\widetilde{h}(0),e_2)=\frac{1}{2},\qquad Q(\widetilde{h}(0),e_3)=-\frac{1}{4}.
\end{equation}
Here, the first component is related to the double cover and we can think of the second component 
\begin{equation}
	-\frac{1}{4}=6\,\frac{\zeta(2)}{(2\pi i)^2}\in\faktor{\Cb}{\Zb(2)}
\end{equation}
as a regulator image of the degenerated Deligne conics.

\bibliography{EAVHS.bib}
\bibliographystyle{alphanum}

\vspace{0.5cm}

\textsc{Mathematical Institute, Heidelberg University}\\
\textsc{Im Neuenheimer Feld 205, 69120 Heidelberg, Germany}\\

\textit{e-mail:}\quad\href{mailto:lhahn@mathi.uni-heidelberg.de}{lhahn@mathi.uni-heidelberg.de},\quad\href{mailto:walcher@uni-heidelberg.de}{walcher@uni-heidelberg.de}

\end{document}